\documentclass[11pt]{amsart}
\usepackage{amssymb}
\usepackage{amsmath}
\usepackage{amsthm}
\usepackage{mathrsfs}
\usepackage{faktor}
\usepackage{tikz}
\usepackage{thmtools}
\usepackage{mathtools}

\usepackage{hyperref}

\usetikzlibrary{arrows}
\usetikzlibrary{shapes.geometric}
\usetikzlibrary{decorations.pathreplacing,decorations.markings}
\usepackage[utf8]{inputenc}
\usepackage{enumitem}
\usepackage{wasysym}

\usepackage[capitalise]{cleveref}

\newtheorem{theorem}{Theorem}[section]
\newtheorem{lem}[theorem]{Lemma}

\newtheorem{cor}[theorem]{Corollary}
\newtheorem{qu}[theorem]{Question}

\theoremstyle{definition}
\newtheorem{dfn}[theorem]{Definition}

\numberwithin{theorem}{section}

\DeclareMathOperator{\Mon}{Mon}

\DeclareMathOperator{\BS}{BS}

\newcommand{\Z}{\mathbb{Z}}

\title{The Diophantine problem in the Baumslag-Gersten group}
\subjclass[2020]{20F05, 20F65, 20F10}
\keywords{Diophantine problem, one-relator group, Baumslag-Gersten group}

\thanks{The research of the authors was supported by the EPSRC Fellowship grant EP/V032003/1 ‘Algorithmic, topological and geometric aspects of infinite groups, monoids and inverse semigroups’. }

\begin{document}

\maketitle

\vspace{-4mm}

\begin{center}
 ROBERT D. GRAY
 \footnote{School of Engineering, Mathematics, and Physics, 
University of East Anglia, Norwich NR4 7TJ, England.
 Email \texttt{Robert.D.Gray@uea.ac.uk}.
 }
 and
 ALEX LEVINE
 \footnote{School of Engineering, Mathematics, and Physics, University of 
East Anglia, Norwich NR4 7TJ, England.
 Email \texttt{A.Levine@uea.ac.uk}.
 }
\end{center}

\vspace{-4mm}

\begin{abstract}
We show that, for every positive prime number $p$, the Baumslag-Gersten group $G_p = \langle a, t \mid (tat^{-1}) a(tat^{-1})^{-1} = a^p \rangle$ has an undecidable Diophantine problem. This gives the first known examples of one-relator groups with undecidable Diophantine problems. Furthermore we apply this to show that the one-relator group $G_p \ast \mathbb{Z}$ has an undecidable single equation problem.  
\end{abstract}

\section{Introduction}
The study of one-relator groups has long been a central topic in
	combinatorial and geometric group theory and has seen a resurgence in
	recent years, with various historic conjectures having been resolved. 
In spite of this, algorithmic results remain relatively few; Magnus
	famously proved they have decidable word problem in the 1930s
	\cite{Magnus32}, and the first author recently showed that there exist
	one-relator groups with undecidable submonoid membership problems
	\cite{Gray20}, but the conjugacy, subgroup membership and isomorphism
	problems all remain open in general (see, for example
	\cite{CFMarcoOneRelatorSurvey}). There have been a good number of positive
	results for special cases, with the class of one-relator groups with
	torsion being particularly well-behaved.  

	In this paper we consider two decision problems that both generalise the conjugacy
	problem: the \emph{Diophantine problem} and the \emph{single equation
	problem}. The former asks if a given finite system of equations in a group
	admits a solution and the latter asks if a single equation admits a
	solution. The first major breakthrough in the study of Diophantine problems
	in groups was a positive answer for the classes of free semigroups and free
	groups by Makanin \cite{Makanin_systems, Makanin_semigroups,
	Makanin_eqns_free_group}. This was later generalised to torsion-free
	hyperbolic groups by Rips and Sela \cite{RipsSela}, and later to all
	hyperbolic groups by Dahmani and Guirardel \cite{dahmani_guirardel}.  
In addition to these results, there are various
	other classes of groups known to have decidable \cite{Dahmani09,
	DiekertMuscholl, duchin_liang_shapiro, Ersov72, GarettaMetabelian, GarettaRandomNilpotent, Levine2022} and undecidable \cite{Dong25,
			duchin_liang_shapiro, ElliottLevine, 
			Garreta2020, GrayLevine25, almost-1rel-DP,
			KharlampovichMiasnikov2025, romankov_undecidable_first,
	RomankovMetabelian} Diophantine and single equation problems. 

The Diophantine problem for one-relator groups has received attention in the
literature recently.  Some of the general positive results described in the
previous paragraph may be applied to show that it is decidable in certain
classes of one-relator groups e.g.  those that are hyperbolic, which includes
all one-relator groups with torsion.  As some one-relator groups are virtually
direct products of hyperbolic groups these also have decidable Diophantine
problems \cite{CiobanuGrayLevineInPrep, CiobanuHoltRees,
CFOneRelatorDiophantine}.  While the Diophantine problem for Baumslag--Solitar
groups remains open in general, in \cite{kharlampovich2020diophantine} it was
shown that the Diophantine problem for quadratic equations in $\BS(1, n)$ is
decidable. 
See also  
\cite{Duncan2023}, \cite{Mandel2023} and \cite{Mandel2023b} for other related work on 
Diophantine problem in $\BS(1,n)$.
The related question of the Diophantine problem for one-relator monoids has also been investigated e.g. in  
\cite{Kharlampovich2019, Garreta2021}. In particular, in 
\cite{Garreta2021} 
it is shown that the open problem of decidability of word equations with length constraints reduces to the Diophantine problem in certain one-relator monoids.
In \cite{CFOneRelatorDiophantine}, Nyberg Brodda explicitly poses the question of whether the Diophantine
	problem is decidable in all one-relator groups. 
The main result of this paper is to  
prove that the Baumslag-Gersten groups
provide a negative answer to this question.
\begin{theorem}
	\label{main-thm}
	Let \(p \in \Z_{> 0}\) be prime and \(G_p = \langle a, t \mid
	(tat^{-1}) a(tat^{-1})^{-1} = a^p \rangle\). Then the Diophantine problem
	in \(G_p\) is undecidable. 
\end{theorem}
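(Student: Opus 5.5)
We will reduce Hilbert's tenth problem to the Diophantine problem in \(G_p\). The plan is to interpret the ring \((\Z,+,\cdot)\), or at least enough of its arithmetic, by a system of equations over \(G_p\). The natural source of arithmetic is the Baumslag--Solitar subgroup \(\langle a, b\rangle\) with \(b = tat^{-1}\). Here \(bab^{-1} = a^p\), so \(\langle a,b\rangle\cong \BS(1,p) \cong \Z[1/p]\rtimes \Z\). Inside it, \(a\) generates a copy of \(\Z\) embedded in \(\Z[1/p]\), and conjugation by \(b^n\) multiplies by \(p^n\). The first step is to express by equations the set of elements of the form \(b^n\), together with the integer \(n\). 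I would do this with centralisers: the solutions \(x\) of \([x,b]=1\) form the centraliser of \(b\). By standard HNN/Britton arguments in \(G_p\), an HNN extension of \(\BS(1,p)\) with stable letter \(t\) conjugating \(a\) to \(b\), this centraliser should be \(\langle b\rangle\). Likewise, \([y,a]=1\) should force \(y\in\langle a\rangle\). So we can quantify existentially over ``integers'' \(a^m\) and over ``exponents'' \(b^n\).

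The second step is to encode exponentiation and then multiplication. The equation \(x a x^{-1} = y\) with \(x = b^n\) and \(y=a^m\) forces \(m = p^n\) (requiring \(n\ge 0\), which we can encode or handle by symmetry). Conjugation by \(t\) converts \(a^k\) into \(b^k\). Hence from an integer variable \(a^k\) we get the exponent variable \(b^k = t a^k t^{-1}\), and the relation ``\(m = p^k\)'' between integer variables \(a^m,a^k\) becomes definable: \((ta^kt^{-1}) a (ta^kt^{-1})^{-1} = a^m\). This is exactly the Baumslag--Gersten mechanism producing its famous Dehn function. So the structure \((\mathbb{N}, +, k\mapsto p^k)\) is existentially interpretable, with addition coming from multiplication in \(\langle a\rangle\). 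To get multiplication I would use that \(b^k a^{m} b^{-k} = a^{m p^k}\), giving the relation \(z = x\cdot p^k\). So we define the relations \(p^k\), \(x p^k\), and addition.

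The main obstacle is to obtain full multiplication from these. Büchi–Semenov-style results say \((\mathbb{N},+,p^x)\) is decidable, but \((\mathbb{N},+,\{(x,y): y = x p^k\ldots\})\) with the ternary relation \(z = x p^y\) is known to define multiplication. Indeed the existential theory of \((\mathbb{N},+, (x,y)\mapsto x p^y)\) is undecidable: one can use \(p^{x}\cdot p^{y} = p^{x+y}\) and \(x p^y\) to get, e.g., squaring of numbers of special form, and then divisibility-based encodings. Rather than rely on this, I would aim to show existential definability of \(z = xy\) restricted to enough values. The simplest route is to check literature results of the form ``Hilbert's tenth problem over \((\mathbb{N}; +, x\cdot p^{y})\) is undecidable'', which I expect to be citable. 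The technical heart is then the centraliser computations in \(G_p\): showing that the solution sets of \([x,b]=1\) and \([x,a]=1\) are exactly the cyclic groups, and handling negative exponents. I would prove these using Britton's lemma for the HNN structure over \(\BS(1,p)\) and the structure of \(\Z[1/p]\rtimes\Z\).
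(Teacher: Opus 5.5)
Your first step rests on a false claim, and this is a genuine gap. Neither $C_{G_p}(a)$ nor $C_{G_p}(b)$ is cyclic.

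\begin{itemize}
\item In the base group $B=\langle a,b\rangle\cong\BS(1,p)$, the element $b^{-1}ab$ satisfies $(b^{-1}ab)^p=a$. So it commutes with $a$ without being a power of $a$. In fact $C_{G_p}(a)=\langle b^{-i}ab^{i}\ (i\ge 0)\rangle\cong\Z[1/p]$.
\item Since $b=tat^{-1}$, we have $C_{G_p}(b)=tC_{G_p}(a)t^{-1}$. This contains $c^{-1}bc\notin\langle b\rangle$, where $c=tbt^{-1}$.
\end{itemize}

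So the centraliser statements you single out as the technical heart cannot be proved by any Britton-type argument, because they are false. Your encoding breaks accordingly: $x=b^{-1}$ and $y=b^{-1}ab$ satisfy $[x,b]=1$, $[y,a]=1$ and $xax^{-1}=y$, yet $y\notin\langle a\rangle$.

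What is missing is an equational way to cut these centralisers down to the cyclic subgroups. The paper uses the system $[X,b]=1\wedge[XaX^{-1},a]=1$. It proves that the only elements of $C_{G_p}(b)=\langle c^{-i}bc^{i}\ (i\ge 0)\rangle$ that conjugate $a$ into $C_{G_p}(a)\subseteq\langle a,b\rangle$ are the powers of $b$. This takes real work:
\begin{itemize}
\item writing $G_p\cong H_p\rtimes\Z$ and splitting $H_p$ repeatedly as an amalgam;
\item showing $a_i^k$ is never conjugate to a power of $a_j$ for $i\neq j$, so that $C_{G_p}(a)$ can be computed exactly;
\item using an amalgam normal form for $\langle a_0,a_1,a_2\rangle$.
\end{itemize}
Then $\langle a\rangle=t^{-1}\langle b\rangle t$ is definable too.

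With that repaired, your second step is essentially the paper's. The condition $b^nab^{-n}\in\langle a\rangle$ forces $n\ge 0$, so $\Mon\langle b\rangle$ (and hence order) is definable. The identity $b^ra^ib^{-r}=a^{ip^r}$ gives the relation $z=xp^r$.

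Your endpoint, however, is left open. You only ``expect'' an undecidability theorem for $+$ together with $z=xp^y$ to be citable. For $p=2$, the result on $(\Z_{>0},+,x\cdot 2^y,\le)$ mentioned in the paper's introduction is of the right shape, but for general primes you would need to name one.

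The paper avoids this issue by also defining ordinary divisibility $\{(b^i,b^{ij})\}$ inside $B$. It does so using $C_B(ab^i)=\langle ab^i\rangle$ for $i\neq 0$ together with $(ab^i)^j=a^{1+p^i+\cdots+p^{(j-1)i}}b^{ij}$. It then appeals to Denef's theorem that the existential theory of $(\Z,+,|,|^p)$ is undecidable, so no multiplication is ever interpreted.
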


	The Baumslag-Gersten group \(G_2\) was first introduced by Baumslag as an example
	of a one-relator group that is not residually finite, by showing that all
	finite quotient groups are cyclic \cite{Baumslag69}. Gersten later proved
	that the Dehn function grows particularly fast \cite{Gersten92} and
	Gersten's bound was improved upon by Platonov \cite{Platonov04}.
	Despite this `strange' behaviour, the Baumslag-Gersten group is more
	tame in some regards. Beese
	showed that conjugacy problem is decidable \cite{BeeseThesis} and
	Myasnikov, Ushakov and Won showed that the word problem is decidable
	in polynomial time \cite{MyasnikovUshakovWon}. Their method involved
	reducing words to sequences of letters and power circuits. Related
	to power circuits is the signature \((\Z_{> 0}, +, x \cdot 2^y, \leq)\),
	which was recently showed to have an undecidable Diophantine problem
	\cite{Rybalov}. 
While those results do not immediately imply that the Baumslag-Gersten group has an undecidable Diophantine problem, they might be regarded as evidence that the Baumslag-Gersten group should be a good candidate for this question.  
Our proof does not use any of the above results on power circuits but 
instead our approach is to prove a reduction to a result of Denef
	\cite{Denef}.

	By reducing the Diophantine problem in a group \(G\) to a single equation
	in \(G \ast \Z\), we are also able to obtain a one-relator group where
	the single equation problem is undecidable.

\begin{cor}
	\label{main-cor}
	Let \(p \in \Z_{> 0}\) be prime. Then the single equation problem in the
	one-relator group
	\(\langle a, c, t  \mid
	(tat^{-1}) a(tat^{-1})^{-1} = a^p \rangle\) is undecidable.
\end{cor}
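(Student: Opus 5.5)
The plan is to reduce the Diophantine problem in $G_p$, which is undecidable by \cref{main-thm}, to the single equation problem in $G_p \ast \langle c \rangle$. We must then check that $\langle a, c, t \mid (tat^{-1})a(tat^{-1})^{-1} = a^p\rangle$ is exactly this free product, which is immediate from the presentation. Given a finite system $w_1(X)=1,\dots,w_n(X)=1$ over $G_p$, with variables $X=(x_1,\dots,x_k)$ and constants from $G_p$, we will effectively construct one equation $E(X,Y)=1$ over $G_p\ast\langle c\rangle$. It should be solvable in $G_p\ast\langle c\rangle$ if and only if the system is solvable in $G_p$.

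\emph{Step 1 (solutions of the system can be taken in $G_p$).} Let $\varphi\colon G_p\ast\langle c\rangle\to G_p$ be the retraction killing $c$. Any equation whose constants lie in $G_p$ has a solution in $G_p$ as soon as it has one in $G_p\ast\langle c\rangle$: apply $\varphi$ to the solution. So it does not matter whether the original system is solved in $G_p$ or in $G_p\ast\langle c\rangle$.

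\emph{Step 2 (encoding the conjunction).} The candidate is
\[
E \;=\; w_1\,(c\,w_2\,c^{-1})\,(c^2 w_3 c^{-2})\cdots(c^{n-1}w_n c^{-(n-1)}) ,
\]
possibly with each $c^i$ replaced by $c^{i}a c^{i}$ or a similar rigid word, to prevent accidental cancellations.

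\begin{itemize}
\item If the $w_i(X)$ all lie in $G_p$, normal forms in the free product show that $E=1$ forces every $w_i(X)=1$. Indeed, the letters $c^{\pm}$ between the syllables $w_i$ can only cancel when the intervening syllable is trivial, and an induction on $n$ then gives the claim.
\item Conversely, a solution of the system in $G_p$ clearly solves $E=1$.
\end{itemize}

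\emph{Step 3 (the main obstacle).} A solution of $E=1$ may use values $X\in G_p\ast\langle c\rangle$. Then the $w_i(X)$ need not lie in $G_p$, and the normal form argument of Step 2 breaks down, since the $w_i(X)$ may themselves contain $c$ and cancel against the separators. Applying $\varphi$ does not help, because it collapses $E$ to $w_1\cdots w_n$.

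The first fix I would try is to keep the separators out of reach of the variables. One way is to replace the separators $c^i$ by words in a fresh generator inside a longer free factor. Since $\langle c\rangle$ is infinite cyclic, this can be simulated by $c$ and a constant: for example, use separators $u_i = c\,a^{i}c^{-1}a\,c\,a^{i}c^{-1}$.

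Next, use a family of endomorphisms $\psi_g\colon c\mapsto g$ for $g\in G_p$. The point is that a solution in $G_p\ast\langle c\rangle$ can be pushed into $G_p$ while keeping finitely many given nontrivial elements nontrivial; this is a discrimination property of $G_p\ast\mathbb Z$ by $G_p$. Since $G_p$ contains non-abelian free subgroups and is torsion-free, I expect such discriminating retractions to exist for suitable $g$ of large order.

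If this route fails, the fallback is a known general lemma from the literature: a system over a group $G$ is equivalent to a single equation over $G\ast\mathbb Z$, built by nesting commutators of the $w_i$ with conjugates of $c$. One would verify its hypotheses (essentially that $G_p$ is nontrivial and torsion-free) and conclude the corollary.
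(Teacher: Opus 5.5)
\paragraph{The gap.} The missing piece is your Step 3, which carries all of the corollary's content beyond \cref{main-thm}, and none of your three options closes it.

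The product-shaped equation cannot be repaired by choosing cleverer constant separators. Suppose some equation of the system is a variable $X$ occurring nowhere else. Then $E=1$ is always solvable: just solve for $X$ in terms of the rest of $E$, whatever the other equations say. For example, the unsolvable system $(X=1)\wedge(Y=1)\wedge(Y=g_0)$ with $g_0\neq 1$ gives $E=X\cdot cYc^{-1}\cdot c^2Yg_0^{-1}c^{-2}$. This has the solution $Y=1$, $X=c^2g_0c^{-2}$ in $G_p\ast\langle c\rangle$. The same trick works with your separators $u_i$. Variables range over all of $G_p\ast\langle c\rangle$, so nothing is ``out of reach''.

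Consequently the discrimination idea cannot help either, even granting that $G_p\ast\langle c\rangle$ is discriminated by $G_p$. A map $c\mapsto g$ only yields a tuple in $G_p$ satisfying the image of $E$, where the free-product separation argument is lost. Preserving nontriviality of finitely many elements can never force $w_i(X)=1$.

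What remains is your fallback, an unnamed lemma with guessed hypotheses. That lemma is exactly what has to be proved, and it must be effective: the paper remarks that some reductions of systems to single equations in the literature are not computable.

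\paragraph{How the paper fills it.} The paper proves this as \cref{SEP-DP}, for an arbitrary finitely generated $G$ with no torsion-freeness assumption. Its construction is correct for all values of the variables in $H=G\ast\langle c\rangle$.
\begin{enumerate}
\item Put $\mathcal W_i(u)=c^{i+1}uc^{-i}u^{-1}$. This element maps to $c$ under $H\to\langle c\rangle$, so it is not conjugate into $G$ and is not a proper power. Hence $C_H(\mathcal W_i(u))=\langle\mathcal W_i(u)\rangle$.
\item For distinct $i,j\geq 1$, $\mathcal W_i(u)$ and $\mathcal W_j(v)$ therefore commute only if they are equal, i.e.\ $[c^i,u]=[c^j,v]$. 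Comparing free-product normal forms then forces $u,v\in\langle c\rangle$.
\item Nest these, e.g.\ $[\mathcal W_1(w_1),\mathcal W_2([\mathcal W_3(w_2),\mathcal W_4(\cdots)])]=1$. A commutator lying in $\langle c\rangle$ must be trivial, since it maps to $1$ under $H\to\langle c\rangle$. This gives a computable single equation that holds iff every $w_i(X)\in\langle c\rangle$.
\end{enumerate}
Note that the conclusion is $w_i(X)\in\langle c\rangle$ rather than $w_i(X)=1$. That is exactly what your Step 1 retraction (killing $c$) turns into a solution in $G_p$.
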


\section{Preliminaries}
\label{sec:prelim}

\subsection{Diophantine and single equation problems}
	
	We begin by formally defining equations in groups.

    \begin{dfn}
		Let \(G\) be a finitely generated group and \(\mathcal X\) be a finite
		set. We call the elements of \(\mathcal X\) \emph{variables}. An
		\emph{equation} in \(G\) is an element \(w \in G \ast F(\mathcal X)\),
		which we denote by \(w = 1\). A \emph{solution} to \(w = 1\) is a
		homomorphism \(\phi \colon G \ast F(\mathcal X) \to G\), such that
		\(\phi(w)  = 1\) and \(\phi (g)= g\) for all \(g \in G\).  A
		\emph{(finite) system of equations} in \(G\) is a finite collection of
		equations in \(G\) using the same set of variables. A \emph{solution}
		to a system \(\mathcal E\) of equations is a homomorphism that is a
		solution to each equation in \(\mathcal E\). If \(\mathcal E\) is a
		system of equations, we will use the operator \(\wedge\) to delimit the
		equations in the system \(\mathcal E\); that is \(\mathcal E = (w_1 =
		1) \wedge (w_2 = 1) \wedge \cdots \wedge (w_n = 1)\).

		The \emph{Diophantine problem} (resp. \emph{single equation problem})
		in \(G\) is the decision question which asks if there is an algorithm
		which takes a system \(\mathcal E\) of equations (resp. single equation
		\(w = 1\)) in \(G\) as input and outputs whether or not \(\mathcal E\)
		(resp. \(w = 1\)) admits a solution. 
	\end{dfn}

Frequently, we will consider a solution to an equation
		with \(n \in \mathbb{Z}_{>0}\) variables \(X_1, \ldots, X_n\) in a
		group \(G\) to be an \(n\)-tuple of elements \((g_1,  \ldots,  g_n)\).
		The homomorphism \(\phi\) can be recovered from a
		tuple by defining \(\phi\) to be the unique homomorphism satisfying
		\(\phi(g)  = g\) for all \(g \in G\) and \(\phi(X_i) = g_i\) for all
		\(i\). We additionally abuse notation by considering equations of the
		form \(w_1 = w_2\), rather than
		\(w_1w_2^{-1} = 1\).

\subsection{Positive existential theories (with constants)}

	The proof of \cref{main-thm} requires us to look to the positive existential
	theory (with constants) in \(G\). This is a similar algorithmic question to
	the Diophantine problem and more importantly is algorithmically equivalent.

	\begin{dfn}
		Let \(G\) be a finitely generated group. A \emph{positive existential
		sentence (with constants)} \(\mathcal F\) in \(G\) is a finite
		disjunction of systems of equations in \(G\). A \emph{solution} to
		\(\mathcal F\) is any homomorphism which is a solution to at least one
		system of equations in \(\mathcal F\). The \emph{positive existential
		theory (with constants)} in \(G\) is the decision question asking if a
		given positive existential sentence with constants admits a solution.
		Hereafter, we omit the term `with constants', considering all positive
		existential sentences to permit them. We delimit systems of equations a
		positive existential sentence with the logical operator \(\vee\); that
		is we denote \(\mathcal F\) as \(\mathcal E_1 \vee \cdots \vee \mathcal
		E_k\), where each \(\mathcal E_i\) is a system of equations.
	\end{dfn}

	It is not difficult to show that on the level of decidability, the
	positive existential theory and the Diophantine problem are equivalent. One
	direction follows as systems of equations are just a special case of
	positive existential sentences, and the converse follows as one can decide
	a finite disjunction by deciding each of the statements within it individually.

	\begin{lem}
		Let \(G\) be a finitely generated group. Then the positive existential
		theory in \(G\) is decidable if and only if the Diophantine problem is
		decidable.
	\end{lem}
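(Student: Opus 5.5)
The plan is to prove the two implications separately. In each direction we build an explicit algorithm for one problem from an algorithm for the other.

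For the forward direction, assume the positive existential theory in \(G\) is decidable. Any finite system of equations \(\mathcal E\) in \(G\) is itself a positive existential sentence: a disjunction with a single disjunct. A homomorphism solves \(\mathcal E\) as a system exactly when it solves \(\mathcal E\) as a sentence, since "solves at least one disjunct" collapses to "solves \(\mathcal E\)". So the algorithm for the positive existential theory, run on input \(\mathcal E\), decides the Diophantine problem.

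For the converse, assume the Diophantine problem in \(G\) is decidable, and take a sentence \(\mathcal F = \mathcal E_1 \vee \cdots \vee \mathcal E_k\).
\begin{enumerate}[label=(\roman*)]
\item We check that \(\mathcal F\) admits a solution if and only if some \(\mathcal E_i\) does. This is immediate from the definition: a solution of \(\mathcal F\) is a homomorphism solving at least one \(\mathcal E_i\). Conversely, a solution of \(\mathcal E_i\) uses only the variables of \(\mathcal E_i\). It extends to the full variable set of \(\mathcal F\) by sending every other variable to \(1\), and the extended map still solves \(\mathcal E_i\), hence solves \(\mathcal F\).
\item The algorithm reads off the finitely many systems \(\mathcal E_1, \ldots, \mathcal E_k\) from the input.
\item It runs the Diophantine algorithm on each \(\mathcal E_i\) in turn.
\item It answers yes if and only if at least one of these runs answers yes.
\end{enumerate}
Since \(k\) is finite and each run halts, the whole procedure halts.

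There is no substantial obstacle here. The only point needing care is the variable bookkeeping in step (i), namely that solutions of individual disjuncts extend to homomorphisms defined on all variables of \(\mathcal F\), and mapping unused variables to the identity handles this.
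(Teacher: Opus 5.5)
Your proposal is correct and matches the paper's argument: a system of equations is a one-disjunct positive existential sentence, and a finite disjunction is decided by running the Diophantine algorithm on each disjunct. Your extra remark about extending a solution of one disjunct to all variables of \(\mathcal F\) (sending unused variables to \(1\)) is a harmless detail that the paper leaves implicit.
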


	A useful observation (that we will not be exploiting) can be seen using the
	fact that \((A \vee B) \wedge C = (A \wedge C) \vee (B \wedge C)\). This
	implies that any sentence made up of equations using \(\wedge\) and
	\(\vee\) is equivalent (in that it admits exactly the same solutions) to a
	positive existential sentence.

\subsection{Definable sets}

	Much of the proof of undecidability involves showing that certain subsets of
	\(G^n\) can be described using systems of equations, or positive
	existential sentences. We formalise this notion below.
    \begin{dfn}
		Let \(G\) be a finitely generated group and let \(\mathcal E\) be a system
		of equations in \(G\). Let \((X_1, \ldots, X_n)\) be 
		an ordered subset of the variables in
		\(\mathcal E\). We define the \emph{set of solutions} to
        \((X_1, \ldots, X_n)\) in \(\mathcal E\) to be set
        \[
        \{(g_1, \ldots, g_n) \mid \textrm{ there exist } h_1, \ldots, h_m
                \in G \textrm{ such that } (g_1, \ldots, g_n, h_1, \ldots, h_m)
		\text{ is a solution to } \mathcal E\}.
		\]
		Let \(D \subseteq G^n\) for some \(n \in \Z_{> 0}\). We say that \(D\) is
		\emph{equationally definable} in \(G\) if there exists a system
		\(\mathcal E\) of equations in \(G\), and an ordered subset \((X_1,
		\ldots, X_n)\) of the variables in \(\mathcal E\), such that \(D\) is
		the set of solutions to \((X_1, \ldots, X_n)\) in \(\mathcal E\). Sets
		\emph{definable in the positive existential theory} of \(G\) are
		defined analogously.
	\end{dfn}

	An useful fact about the sets definable in the positive existential
	theory of a group that is not true about the class of equationally
	definable sets, is that it is closed under finite unions.

	If \(E \subseteq G^n\) is equationally definable in \(G\), we will
	frequently define a system of equations of the form \(((X_1, \ldots, X_n)
	\in E) \wedge \cdots\), where \((X_1, \ldots, X_n) \in E\) is taken to mean
	any system of equations in \(G\) where the solution set to \((X_1, \ldots,
	X_n)\) is \(E\). We will also do this analogously for sets definable in the
	positive existential theory.

\subsection{Equations in the integers with divisibility constraints}
Unlike most proofs of undecidability of Diophantine problems, we do not
provide a direct reduction to Hilbert's tenth problem over the ring of integers.
Instead, we encode another undecidable number theoretic problem, due to Denef.
Let \(p \in \Z_{> 0}\) be prime. Define the binary relation \(|^p\) on
\(\Z\) by \(x |^p y\) if there exists \(r \in \Z_{\geq 0}\) such that
\(y = \pm x p^r\). We use \(|\) to denote standard divisibility of integers. A result
of Denef states that the Diophantine problem in the signature \((\Z, +, |, |^p)\)
is undecidable \cite[Corollary on page 132]{Denef}. That is, it is undecidable given as input a system \(\mathcal E\) of equations in the
group \(\Z\) together with finitely many statements of the form \(X|Y\) or \(X|^p Y\),
for variables \(X\) and \(Y\),
whether \(\mathcal E\) admits a solution such that for all statements \(X|Y\) (resp.
\(X|^p Y\)) the solution \((x, y)\) to \((X, Y)\) satisfies \(x | y\) (resp. \(x |^p y\)).

\section{Proof of main result}

We begin by working in the solvable Baumslag-Solitar groups. We will later
reduce various questions about \(G_p\) to the solvable Baumslag-Solitar groups.
As with many proofs of undecidability of Diophantine problems in groups,
centralisers provide a useful source of equationally definable sets.

\begin{lem}
	\label{BS-cents}
	Let \(p \in \Z\) be such that \(p > 1\). In \(\operatorname{BS}(1, p) =
	\langle a, b \mid bab^{-1} = a^p \rangle\), we have:
	\begin{enumerate}
		\item \(C_{\operatorname{BS}(1, p)}(a) = \langle b^{-k} a b^k \; (k \in \Z) \rangle\);
			\label{item:BS-cent-a}
		\item \(C_{\operatorname{BS}(1, p)}(b^i) = \langle b \rangle\) for all
				\(i \in \Z \setminus \{0\}\);
			\label{item:BS-cent-b}
		\item \(C_{\operatorname{BS}(1, p)}(ab^i) = \langle ab^i \rangle\) for all
				\(i \in \Z \setminus \{0\}\).
			\label{item:BS-cent-abi}
	\end{enumerate}
\end{lem}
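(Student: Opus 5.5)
The plan is to use the standard faithful representation of \(\operatorname{BS}(1,p)\) as the semidirect product \(\Z[1/p] \rtimes \Z\). Here \(a \mapsto (1,0)\) and \(b \mapsto (0,1)\), with multiplication \((x,m)(y,n) = (x + p^m y, m+n)\). Under this identification the normal closure of \(a\) is the kernel \(\Z[1/p] \times \{0\}\) of the exponent-sum map \(\sigma\colon \operatorname{BS}(1,p) \to \Z\) counting \(b\). The element \(b^{-k} a b^k\) corresponds to \((p^{-k}, 0)\). So \(\langle b^{-k}ab^k \; (k \in \Z)\rangle = \Z[1/p]\times\{0\}\), since \(p^{-k}\) together with negatives generates \(\Z[1/p]\) additively. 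All three items then reduce to short computations with the commutation rule in this model.

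For \eqref{item:BS-cent-a}, write \(g=(x,m)\) and compute
\[
g(1,0) = (x + p^m, m), \qquad (1,0)g = (1 + x, m).
\]
These agree iff \(p^m = 1\), that is, iff \(m=0\), since \(p>1\). Hence \(C(a) = \Z[1/p]\times\{0\}\), which is the stated subgroup.

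For \eqref{item:BS-cent-b}, compute
\[
(x,m)(0,i) = (x, m+i), \qquad (0,i)(x,m) = (p^i x, m+i).
\]
These agree iff \(p^i x = x\), that is, iff \(x = 0\), since \(i \neq 0\) and \(p>1\). So the centraliser is \(\{(0,m)\} = \langle b\rangle\).

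Item \eqref{item:BS-cent-abi} is the main step. Let \(h = ab^i = (1,i)\) with \(i\neq 0\), and \(g=(x,m)\). Then
\[
gh = (x + p^m, m+i), \qquad hg = (1 + p^i x, m+i),
\]
so \(g\) commutes with \(h\) iff \(x(1-p^i) = 1 - p^m\), that is, iff \(x = \frac{p^m-1}{p^i-1}\). Thus for each \(m\) there is at most one centralising element with \(\sigma = m\). The centraliser therefore injects into \(\Z\) via \(\sigma\), and is infinite cyclic, generated by an element of minimal positive exponent \(m_0\) dividing every admissible \(m\).

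It remains to show that \(m_0 = |i|\), i.e. that \(x\) lies in \(\Z[1/p]\) only when \(i \mid m\). Write \(d=\gcd(m,i)\). Then
\[
\gcd(p^m-1,\,p^i-1) = p^d-1 \quad (\text{interpreted with } |m|, |i| \text{ after clearing powers of } p).
\]
Reducing the fraction gives denominator \((p^{|i|}-1)/(p^d-1)\), which is coprime to \(p\). This denominator equals \(1\) only when \(d = |i|\), i.e. \(i \mid m\). Negative exponents need a little care: \(p^{m}-1 = -p^{m}(p^{-m}-1)\), which only changes the numerator by a unit of \(\Z[1/p]\), so the same argument applies.

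Conversely, when \(i \mid m\) the powers \(h^{m/i}\) supply the solutions. Hence \(C(h)=\langle h\rangle\). The only real obstacle is this gcd/denominator argument in \(\Z[1/p]\).
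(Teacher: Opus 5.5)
Your proof is correct, but it takes a different route from the paper. The paper does not compute in \(\Z[1/p] \rtimes \Z\) from the outset. It cites a theorem on fixed subgroups of automorphisms of \(\BS(1,p)\) to get a dichotomy: every centraliser of a nontrivial element is either infinite cyclic or contained in \(\langle\langle a \rangle\rangle\). Each item then becomes a short check. For (1), the centraliser contains the non-cyclic group \(\langle\langle a \rangle\rangle\). For (2), the centraliser is a cyclic group containing \(b\), and \(b\) is not a proper power. For (3), it remains to show \(ab^i\) is not a proper power. The paper does this by writing a putative root as \((xp^{-q}, r)\) with \(rk = i\) and analysing \(xp^{-q}(1 + p^r + \cdots + p^{(k-1)r}) = 1\) modulo \(p\).

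Your key observation replaces that dichotomy. When the centralised element has nonzero \(b\)-exponent, the \(\Z[1/p]\)-coordinate of a centralising element is determined by its own exponent, so the centraliser is infinite cyclic for free. You therefore do not need the external result, whose relevant conclusion, as the paper itself notes, is only implicit in the cited proof. The arithmetic core is the same in both arguments: the paper's putative root has coordinate \((p^r-1)/(p^i-1)\), which is exactly your \(x\) with \(m = r\). You handle all \(m\) at once, using \(\gcd(p^{|m|}-1,\,p^{|i|}-1) = p^{\gcd(m,i)}-1\) and the fact that a reduced fraction whose denominator is prime to \(p\) lies in \(\Z[1/p]\) only if that denominator is \(\pm 1\). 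The paper only needs divisors \(r\) of \(i\) and a congruence argument.

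Your version is self-contained and explicit; it even produces the centralising element for each admissible exponent. The paper's is shorter given the cited theorem and treats the three items uniformly. When you write it up, make two small points explicit. For \(i<0\), apply the unit adjustment \(p^i - 1 = -p^i(p^{-i}-1)\) to the denominator as well as to the numerator. Also note that \(d=\gcd(m,i) \geq 1\) because \(i \neq 0\), so \(p^d - 1 \neq 0\).
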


\begin{proof}
	By \cite[Theorem 3.2]{BS-fixed}, every fixed point subgroup of a non-trivial
	automorphism of \(\BS(1, p)\) is either infinite cyclic or contained in
	\(\langle \langle a \rangle \rangle\) (note that the statement does not
	explicitly state this, but the proof does indeed show it). Since
	centralisers are just fixed point subgroups of inner automorphisms, it
	follows that each of the centralisers we are considering are either
	infinite cyclic or contained in \(\langle \langle a \rangle \rangle\). 

	\eqref{item:BS-cent-a}. In this case, it is not difficult to show that
	every conjugate of \(a\) by a power of \(b\) centralises \(a\). Thus
	we must have that \(C_{\BS(1, p)}(a) = \langle \langle a \rangle \rangle\),
	which is the desired set.

	\eqref{item:BS-cent-b}. Note that \(b \notin \langle \langle a \rangle
	\rangle\), but \(b \in C_{\BS(1, p)}(b^i)\), and so \(C_{\BS(1, p)}(b^i)\)
	must be a cyclic subgroup containing \(b\).  When quotienting \(\BS(1, p)\)
	onto \(\Z\), \(b\) gets mapped to a generator, which is never a proper
	power in \(\Z\), and so \(b\) cannot be a proper power in \(\BS(1, p)\).
	Thus \(C_{\BS(1, p)}(b) = \langle b \rangle\).

	\eqref{item:BS-cent-abi}: Note that \(ab^i \in C_{\BS(1,
	p)}(ab^i)\), but \(ab^i \notin \langle \langle a \rangle \rangle\), and so
	\(C_{\BS(1, p)}(ab^i) \not \subseteq \langle \langle a \rangle \rangle\). 
	It thus follows that \(\langle ab^i \rangle\) is a cyclic subgroup of
	\(C_{\BS(1, p)}(ab^i)\). Since \(\langle ab^i \rangle\) is a subgroup of
	\(C_{\BS(1, p)}(ab^i)\), in order to show that these sets coincide, it
	suffices to show that \(ab^i\) is not a proper power. If \(ab^i\) is a
	\(k\)th power of some element \(g\), then after projecting onto \(\Z\), it
	follows that \(b^i\) is also a \(k\)th power. It thus follows that \(k |
	i\), and \(g = ub^r\), for some \(u \in \langle \langle a \rangle \rangle\)
	and \(r \in \Z\), where \(rk = i\). Viewing \(\langle \langle a \rangle
	\rangle\) as \(\Z[1/p]\), we can view \(ub^r\) as \((xp^{-q}, r)\), where
	\(xp^{-q} \in \Z[1/p]\) is a fraction in reduced terms. Then
	\(xp^{-q} + xp^{r - q} + \cdots + xp^{(k - 1)r - q} = 1\). We show
	that \(k = 1\); that is, \(ab^i\) is not a proper power.

	Case 1: \(r \geq 0\), then \(x(1 + p^r + \cdots + p^{(k - 1)r}) = p^q\).
	Reducing mod \(p\) gives \(x \equiv 0 \mod p\). Since \(x p^{-q}\) was a
	fraction in reduced terms, it follows that \(q = 0\). Thus
	\(x(1+ p^r + \cdots + p^{(k - 1)r}) = 1\), which implies that
	\(x = 1\) and \(k = 1\), as required. 

	Case 2: \(r < 0\). We still have \(x(1 + p^r + \cdots + p^{(k - 1)r}) =
	p^q\), but the left hand side contains non-integers (namely the negative
	powers of \(p\)). However, multiplying through by \(p^{-(k - 1)r}\) gives
	\(x(-p^{(k - 1) r} + p^{-(k - 2)r} + \cdots + 1) = p^q\). Again,
	reducing mod \(p\) gives that \(x \equiv 0 \mod p\), and so
	\(q = 0\). It thus follows that \(x(1 + p^{-r} + \cdots + p^{-(k - 1)r})
	= 1\), and so \(x = 1\) and \(k = 1\). Again, we have that being a
	\(k\)th power is not a proper power, as required.

	We have thus shown that \(ab^i\) is not a proper power, and so
	\(\langle ab^i \rangle\) is the maximal cyclic subgroup containing
	it, and therefore must equal \(C_{\BS(1, p)}(ab^i)\).
\end{proof}

We next compute transversals of the two natural copies of \(\Z\) sitting inside
a solvable Baumslag-Solitar group. As \(G_p\) is an HNN-extension of \(\BS(1, p)\)
that identifies these subgroups, this will allow us to compute a normal form
for an important subgroup of \(G_p\). 

\begin{lem}
	\label{transversals}
	Let \(p \in \Z\) be such that \(p > 1\).
	Let \(\BS(1, p) = \langle a, b \mid bab^{-1} = a^p\rangle\). Define
	\begin{align*}
	T_A & =    
		\{ b^{-i} a^j b^k \mid i,k \in \Z_{> 0}, \; j \in \{0, \ldots, p^i -
		1\}, \; j \not\in p\Z \}  \\
& \cup \{ b^k \mid k \in \Z_{\geq 0} \} \\ 
& \cup \{ b^{-i} a^j \mid i \in \Z_{\geq 0}, \; j \in \{0, \ldots, p^i - 1\} \}, \\ 
	T_B & = \{ a^j  b^k\mid k \in \Z_{> 0}, j \in \Z \setminus p\Z\}
	\cup \{a^j \mid j \in \Z\}.
	\end{align*}
	Then \(T_A\) is a right transversal for \(\langle a \rangle\) in
	\(\BS(1, p)\) and \(T_B\) is a right transversal for \(\langle b \rangle\)
	in \(\BS(1, p)\).
\end{lem}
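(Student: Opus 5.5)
The plan is to work in the standard model of \(\BS(1,p)\) as a semidirect product \(\Z[1/p] \rtimes \Z\), which is already used in the proof of \cref{BS-cents}. Concretely, I would identify the element \(a^x b^n\), where \(x \in \Z[1/p]\) and \(a^{j/p^i}\) means \(b^{-i}a^jb^i\), with the pair \((x,n)\). The relation \(bab^{-1}=a^p\) then says that conjugation by \(b\) multiplies the first coordinate by \(p\). In particular, \(b^{-i}a^jb^k\) corresponds to \((j/p^i,\,k-i)\). Every element has a unique such pair.

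The key computation is the effect of left multiplication, since right cosets are orbits under it. We have \(a^m\cdot(x,n)=(x+m,n)\) and \(b^m\cdot(x,n)=(p^mx,\,n+m)\). Consequently the right cosets of \(\langle a\rangle\) are indexed by the pair \((x \bmod \Z,\, n)\), and the right coset of \(\langle b\rangle\) through \((x,n)\) is \(\{(p^mx,\,n+m) : m\in\Z\}\).

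For \(T_A\), I would translate the three pieces into pairs. For the first piece we get \((j/p^i,\,k-i)\), where \(j/p^i\) has exact denominator \(p^i\) with \(i>0\) and \(n=k-i>-i\). The second piece gives \((0,k)\) with \(k\ge 0\). The third piece gives \((j/p^i,\,-i)\) with \(i \geq 0\), ranging over all classes mod \(\Z\) with denominator dividing \(p^i\). Given a coset \((x \bmod \Z, n)\), let \(p^e\) be the exact denominator of \(x\) mod \(\Z\). I would split into cases as follows.
\begin{itemize}
\item If \(n\le -e\), the coset meets the third piece, with \(i=-n\) and \(j\equiv p^ix \pmod{p^i}\).
\item If \(n>-e\) and \(e>0\), it meets the first piece, with \(i=e\), \(k=n+e\) and \(j\equiv p^e x \pmod{p^e}\).
\item If \(e=0\) and \(n>0\), it meets the second piece.
\end{itemize}
Uniqueness follows because the invariants \(e\) and \(n\) determine which piece applies, and within a piece they determine the indices. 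The only coincidence between pieces is the identity, which lies in both the second and third pieces; this does not matter, since \(T_A\) is a set.

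For \(T_B\), if \(x=0\) the coset contains exactly one element with \(n+m=0\), namely \(a^0\). If \(x\neq0\), I would write \(x=yp^s\) with \(y\in\Z\setminus p\Z\) and \(s\in\Z\). The coset is then \(\{(yp^{s+m},\,n+m)\}\).
\begin{itemize}
\item If \(s\ge n\), the element with \(m=-n\) is \((yp^{s-n},0)=a^{yp^{s-n}}\in T_B\).
\item If \(s<n\), the element with \(m=-s\) is \((y,\,n-s)\) with \(n-s>0\) and \(p\nmid y\), which lies in the first set of \(T_B\).
\end{itemize}
For uniqueness, note that there is exactly one element of each coset with second coordinate \(0\); that element has integer first coordinate precisely when \(s\ge n\). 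Likewise, the only element whose first coordinate is an integer prime to \(p\) is the one with \(m=-s\), and its second coordinate \(n-s\) is positive precisely when \(s<n\). So exactly one of the two kinds of representative occurs.

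The main obstacle is bookkeeping rather than ideas. The points needing care are getting the direction of the action right (left multiplication for right cosets) and the boundary cases \(i=0\), \(k=0\) and \(x=0\). I would double-check these against the exact index ranges in the statement.
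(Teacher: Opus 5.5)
Your proof is correct, and it takes a different route from the paper. The paper does not prove the \(T_A\) claim itself; it cites Lemma 3.1 of \cite{almost-1rel-DP}. For \(T_B\) it works with the Burillo--Elder normal form \(NF\):
\begin{itemize}
\item existence comes from rewriting each normal-form word \(b^{-i}a^jb^k\) as an element of \(\langle b\rangle T_B\), splitting off the \(p\)-power part of \(j\) when \(i=0\);
\item uniqueness comes from a three-case comparison, in which \(uv^{-1}\) is put into normal form and matched against \(b^q\) as words.
\end{itemize}

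You instead pass to coordinates in \(\Z[1/p]\rtimes\Z\) and write down complete invariants of the cosets. For \(\langle a\rangle\) the invariant is \((x \bmod \Z, n)\). For \(\langle b\rangle\), each orbit \(\{(p^mx,n+m)\}\) with \(x\neq0\) contains exactly one point with second coordinate \(0\) and exactly one point whose first coordinate lies in \(\Z\setminus p\Z\). The sign of \(s-n\) decides which of the two lies in \(T_B\).

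This turns existence and uniqueness into a single computation and treats both transversals uniformly. It also makes the lemma self-contained, with no appeal to the external normal form or the cited lemma. The paper's version has the advantage of staying in the language of normal-form words, which it reuses in \cref{L_2-norm-form} and \cref{conj-into-ab}.

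One wording point: the lemma only assumes \(p>1\), not that \(p\) is prime. So ``integer prime to \(p\)'' should read ``integer not divisible by \(p\)'', and ``exact denominator \(p^e\)'' should mean the least \(e\ge 0\) with \(p^ex\in\Z\). For composite \(p\) the literal readings fail; for instance \(a^2b\in T_B\) when \(p=4\), although \(2\) is not coprime to \(4\). Your argument only ever uses \(p\nmid y\), the existence and uniqueness of \(x=yp^s\) with \(y\in\Z\setminus p\Z\), and this minimal \(e\). All of these hold for every \(p>1\), so the proof goes through unchanged.
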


\begin{proof}
	The proof of \cite[Lemma
	3.1]{almost-1rel-DP} shows that \(T_A\) is indeed a right transversal for
	\(\langle a \rangle\) in \(\BS(1, p)\).
	We thus must show that \(T_B\) is a right transversal for \(\langle b
	\rangle\) inside \(\BS(1, p)\). By \cite[Section~2]{BurilloElder}, \(\BS(1,
	p)\) admits the normal form
		\[
				NF = \{ b^{-i} a^j b^k \mid i, k \in \Z_{> 0}, j \in \Z \setminus p\Z \} 
				\cup \{b^{-i} a^j b^k \mid i, k \in \Z_{\geq 0}, ik = 0, j \in \Z\}.
		\]
		We first show that \(T_B\) contains a right transversal for
				\(\langle b \rangle\) in \(\BS(1, p)\). Let \(g \in \BS(1, p)\). If
				the normal form for \(g\) belongs to the first set in the union defining
				\(NF\), then \(g = b^{-i} a^j b^k\), where \(i, k \in \Z_{> 0}\) and
				\(j \in \Z \setminus p\Z\). In this case, \(a^j b^k \in T_B\) and so
				\(g \in \langle b \rangle T_B\). Otherwise, \(g = b^{-i} a^j b^k\), for
				some \(i, k \in \Z_{\geq 0}\) and \(j \in \Z\) such that \(i = 0\) or
				\(k = 0\). If \(k = 0\), then \(a^j \in T_B\) and so \(g \in \langle
				b \rangle T_B\). If \(i = 0\), then \(g = a^j b^k\). If \(j = 0\) then
				\(g \in \langle b \rangle\) and there is nothing to prove, so suppose
				\(j \neq 0\). We can thus write 
				\(j = p^r j'\) for some \(r \in \Z_{\geq 0}\) and \(j' \in \Z \setminus p\Z\).
				Thus \[
						g = a^j b^k = a^{p^rj'} b^k =
						\begin{cases}
							b^k a^{p^{r - k} j'} & r \geq k \\
							b^r a^{j'} b^{k - r} & k \geq r.
						\end{cases}
					\]
				Both cases lie in \(\langle b \rangle T_B\), and so \(T_B\) contains
				a right transversal for \(\langle b \rangle\) in \(\BS(1, p)\).
		
				It thus
		remains to show that distinct elements of \(T_B\) lie in distinct right
		cosets of \(\langle b \rangle\). Let \(u, v \in T_B\) be such that
		\(\langle b \rangle u = \langle b \rangle v\). We consider the three
		cases (up to symmetry) arising from whether \(u\) and \(v\) lie in the
		first or second sets in the union defining \(T_B\). 

		Case 1: \(u = a^j\) and \(v = a^{j'}\) for some \(j, j' \in \Z\). Then
		\(uv^{-1} \in \langle b \rangle\) implies that \(a^{j - j'} = b^k\)
		for some \(k \in \Z\). But both \(a^{j - j'}\) and \(b^k\) belong to
		the normal form \(NF\), and so must be equal as words. It
		follows that \(j - j' = k = 0\), and so \(u = v\), as required.

		Case 2: \(u = a^j b^k\) for some \(k \in \Z_{> 0}\) and \(j \in \Z
		\setminus p \Z\) and \(v = a^{j'}\) for some \(j' \in \Z\). Then
		\(uv^{-1} \in \langle b \rangle\) implies that
		\(a^{j} b^k  a^{-j'} = b^q\) for some \(q \in \Z\). 
		Note that \(a^{j} b^k a^{-j'} = a^{j - p^k j'} b^k\). This is now
		in the normal form \(NF\), and thus must equal \(b^q\) as a word. It follows
		that \(j - p^k j' = 0\). Since \(j \in \Z \setminus p\Z\), this
		implies that \(k = 0\), a contradiction. This case therefore does
		not occur.

		Case 3: \(u = a^j b^k\) and \(v = a^{j'} b^{k'}\) for some
		\(k, k' \in \Z_{> 0}\) and \(j, j' \in \Z \setminus p\Z\). Then
		\(uv^{-1} \in \langle b \rangle\) implies that
		\( a^j b^{k - k'} a^{-j'} = b^q\) for some \(q \in \Z\). We consider
		the subcases \(k - k' > 0\), \(k - k' < 0\) and \(k - k' = 0\)
		separately.

		Subcase 3A: \(k - k' > 0\). Then \(a^j b^{k - k'} a^{-j'} = 
		a^{j -j' p^{k - k'}} b^{k - k'}\), which is now in the normal form \(NF\).
		It thus equals \(b^q\) as a word, and so \(j - j' p^{k - k'}
		= 0\). As with Case 2, \(j \notin p\Z\), and so \(k - k' = 0\),
		a contradiction to our subcase assumption. 

		Subcase 3B: \(k - k' < 0\). Then \(a^j b^{k - k'} a^{-j'}
		= b^{k - k'} a^{j p^{k' - k} - j'}\), which is in normal form,
		and so must equal \(b^q\) as a word. Thus \(j p^{k' - k} -j' = 0\),
		and using the fact that \(j' \notin p\Z\), this implies that
		\(k - k' = 0\), contradicting our subcase assumption.

		Subcase 3C: \(k - k' = 0\). Then \(a^j b^{k - k'} a^{-j'}
		= a^{j - j'}\), which is in normal form, and thus equals 
		\(b^q\) as a word. It follows that \(j - j' = 0\), and so
		\(u = v\), as required.
\end{proof}

The group \(G_p\) can be obtained by taking a semidirect product of
\(\Z\)-indexed copies of \(\BS(1, p)\) joined with \(\Z\)-indexed free products
with amalgamation (\(H_p\)) with \(\Z\) (\cref{inf-pres}). However, much of our
work occurs within a subgroup generated by three of the generators of this
group. We thus require a normal form for this subgroup, which allows us to
prove that specific systems of equations have the sets of solutions that we
claim. Note that after restricting to a Baumslag-Solitar subgroup of \(L_2\),
this normal form does not in fact coincide with the normal form \(NF\) for
Baumslag-Solitar groups that we saw in the proof of \cref{transversals}.

\begin{lem}
		\label{L_2-norm-form}
		Let \(p \in \Z\) be such that \(p > 1\). Let \(L_2 = \langle a_{-1}, a_0, a_1 \mid
		a_0 a_{-1} a_0^{-1} = a_{-1}^p, a_1 a_0 a_1^{-1} = a_0^p \rangle\). Define
	\begin{align*}
	T_A & =    
		\{ a_1^{-i} a_0^j a_1^k \mid i,k \in \Z_{> 0}, \; j \in \{0, \ldots, p^i -
		1\}, \; j \not\in p\Z \}  \\
& \cup \{ a_1^k \mid k \in \Z_{\geq 0} \} \\ 
& \cup \{ a_1^{-i} a_0^j \mid i \in \Z_{\geq 0}, \; j \in \{0, \ldots, p^i - 1\} \}, \\ 
	T_B & = \{ a_{-1}^j  a_0^k\mid k \in \Z_{> 0}, j \in \Z \setminus p\Z\}
	\cup \{a_{-1}^j \mid j \in \Z\}.
	\end{align*}
	Then
	\[
		\{a_0^k x_1 y_1 \cdots x_n y_n \mid k \in \Z, n \in \Z_{> 0}, x_1 \in T_A,
				x_i \in T_A \setminus \{1\} \text{ for } i \neq 1, y_n \in T_B, y_{i} \in
		T_B \setminus \{1\} \text{ for } j \neq n\}
	\]
	is a normal form for \(L_2\).
	Moreover, the normal form for any element of \(\langle a_{-1}, a_0
	\rangle\) is a word over \(\{a_{-1}, a_0\}\).
\end{lem}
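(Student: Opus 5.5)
We will realise \(L_2\) as a free product with amalgamation and then apply the standard normal form theorem for amalgams. Set \(A = \langle a_0, a_1 \mid a_1 a_0 a_1^{-1} = a_0^p\rangle\) and \(B = \langle a_{-1}, a_0 \mid a_0 a_{-1} a_0^{-1} = a_{-1}^p\rangle\). Both are copies of \(\BS(1,p)\): in \(A\) take \(a = a_0\), \(b = a_1\); in \(B\) take \(a = a_{-1}\), \(b = a_0\). The presentation of \(L_2\) is then exactly that of \(A \ast_{C} B\), where \(C\) is infinite cyclic, generated by \(a_0\) in \(A\) and by \(a_0\) in \(B\). Since \(a_0\) has infinite order in each factor (by the normal form \(NF\) from \cite{BurilloElder}), this is a genuine amalgam and both factors embed.

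Under these identifications, \cref{transversals} shows that \(T_A\) is a right transversal for \(\langle a_0\rangle\) in \(A\), because there \(a_0\) plays the role of \(a\). It also shows that \(T_B\) is a right transversal for \(\langle a_0 \rangle\) in \(B\), because there \(a_0\) plays the role of \(b\). Both transversals contain \(1\): take \(i=0, j=0\) in \(T_A\), and \(j = 0\) in \(T_B\).

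We then invoke the normal form theorem for free products with amalgamation. Fix right transversals containing \(1\). Every element is then uniquely \(c\, z_1 \cdots z_m\), where \(c \in C\), each \(z_i\) is a non-trivial transversal element, and consecutive \(z_i\) come from alternating factors. To match the stated form \(a_0^k x_1 y_1 \cdots x_n y_n\), we allow \(x_1 = 1\) when the sequence starts with a \(B\)-element and \(y_n = 1\) when it ends with an \(A\)-element; all interior letters must be non-trivial. The one case needing care is an element of \(C\) itself, which we encode as \(n=1\), \(x_1 = y_1 = 1\). With these conventions, padding gives a bijection between alternating reduced sequences and the tuples in the displayed set, so the displayed set is a normal form.

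For the final claim, let \(g \in \langle a_{-1}, a_0\rangle = B\). If \(g \in C\), its normal form is \(a_0^k\). Otherwise the amalgam normal form of \(g\) is \(c\,z_1\) with \(z_1 \in T_B\setminus\{1\}\); this is the padded form \(n=1\), \(x_1=1\), \(y_1 = z_1\). Every element of \(T_B\) is a word in \(a_{-1}, a_0\), so the normal form of \(g\) is a word over \(\{a_{-1}, a_0\}\).

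The main obstacle is only bookkeeping. We must make sure the padding conventions exactly reproduce the stated constraints, including the apparent typo "\(j \neq n\)", which should read \(i \neq n\). We must also check that the amalgam is nondegenerate, i.e. that \(a_0\) has infinite order in both factors.
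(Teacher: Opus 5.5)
Your proposal is correct and is essentially the paper's proof. The paper also views \(L_2\) as the amalgam of the two copies of \(\BS(1,p)\) over \(\langle a_0\rangle\), takes the transversals from \cref{transversals}, applies the normal form theorem for amalgamated free products, and gets the second claim by writing each element of \(\langle a_{-1}, a_0\rangle\) as \(a_0^k y\) with \(y \in T_B\); your extra bookkeeping (the padding conventions, the infinite-order check for \(a_0\) in both factors, and reading ``\(j \neq n\)'' as \(i \neq n\)) only makes explicit what the paper leaves implicit.
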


\begin{proof}
	We first note that \(L_2\) is a free product of two solvable
	Baumslag-Solitar groups \(\langle a_{-1}, a_0 \mid a_0 a_{-1} a_0^{-1} =
	a_{-1}^p \rangle\) and \(\langle a_0, a_1 \mid a_1 a_0 a_1^{-1} = a_0^p
	\rangle\), amalgamated across the subgroup \(\langle a_0 \rangle\). By
	\cref{transversals}, \(T_B\) is a right transversal for \(\langle a_0
	\rangle\) inside \(\langle a_{-1}, a_0 \mid a_0 a_{-1} a_0^{-1} = a_{-1}^p
	\rangle\) and \(T_A\) is a right transversal for \(\langle a_0 \rangle\)
	inside \(\langle a_0, a_1 \mid a_1 a_0 a_1^{-1} = a_0^p \rangle\). The
	result now follows by the normal form for amalgamated free products; see for
	example \cite[Theorem 11.3]{BogopolskiBook}.

	For the second claim, note that as \(T_B\) is a right transversal for
	\(\langle a_0 \rangle\) in \(\langle a_{-1}, a_0 \rangle\), every element
	in \(\langle a_{-1}, a_0 \rangle\) can be expressed in the form
	\(a_0^k y\) for some \(y \in T_B\). Since this is in normal
	form, this is the unique normal form for this word.
\end{proof}

We next consider an infinitely generated group \(H_p\). As we will see later in
\cref{inf-pres}, \(G_p\) is isomorphic to \(H_p \rtimes \Z\). The next two
lemmas deal with various technical facts about \(H_p\) that are easier to
consider inside \(H_p\), rather than working directly with \(G_p\).

\begin{lem}
	\label{not-conj}
	Let \(p \in \Z\) be such that \(p > 1\) and
	\[
	 H_p = 
			\langle a_i \; (i \in \Z) \mid a_{i + 1} a_i
	a_{i + 1}^{-1} = a_i^p \; (i \in \Z)\rangle.
	\]
	If \(i \neq j\), and \(k \neq 0\), then \(a_i^k\) is not conjugate to any
	power of \(a_j\).
\end{lem}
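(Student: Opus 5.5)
The plan is to separate $a_i^k$ from powers of $a_j$ using quotients of $H_p$ obtained by killing all generators below a given index. For $m \in \Z$, let $H^{\geq m}_p = \langle a_l \; (l \geq m) \mid a_{l+1} a_l a_{l+1}^{-1} = a_l^p \; (l \geq m) \rangle$. Define $\pi_m \colon H_p \to H^{\geq m}_p$ by $a_l \mapsto a_l$ for $l \geq m$ and $a_l \mapsto 1$ for $l < m$.

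\emph{Step 1: $\pi_m$ is a well-defined homomorphism.} We check the defining relations $a_{l+1}a_la_{l+1}^{-1} = a_l^p$ of $H_p$ in three ranges.
\begin{enumerate}
\item For $l \geq m$ the relation maps to a defining relation of $H^{\geq m}_p$.
\item For $l + 1 \leq m - 1$ both sides map to $1$.
\item For $l = m-1$ it becomes $a_m \cdot 1 \cdot a_m^{-1} = 1$, which holds trivially.
\end{enumerate}

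\emph{Step 2: $a_m$ has infinite order in $H^{\geq m}_p$, and each $a_l$ has infinite order in $H_p$.} This is the step needing most care. Suppose a relation $a_m^k = 1$ held in $H^{\geq m}_p$. It would be a consequence of finitely many defining relations, so it would already hold in
\[
K_n = \langle a_m, \ldots, a_{m+n} \mid a_{l+1} a_l a_{l+1}^{-1} = a_l^p \; (m \leq l < m+n) \rangle
\]
for some $n$. I will prove by induction on $n$ that every $a_l$ with $m \leq l \leq m+n$ has infinite order in $K_n$ and that $K_{n-1}$ embeds in $K_n$.

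The inductive step uses the decomposition
\[
K_n = K_{n-1} \ast_{\langle a_{m+n-1} \rangle} \langle a_{m+n-1}, a_{m+n} \mid a_{m+n} a_{m+n-1} a_{m+n}^{-1} = a_{m+n-1}^p \rangle .
\]
The second factor is a copy of $\BS(1,p)$, in which both generators have infinite order. By induction $a_{m+n-1}$ has infinite order in $K_{n-1}$, so the amalgamated subgroups are both infinite cyclic and the amalgam is legitimate. Each factor then embeds in $K_n$ by the normal form theorem for amalgamated free products (the same result used in \cref{L_2-norm-form}). This gives the claim for $H^{\geq m}_p$. The same argument applied to the finite windows $\langle a_{l-n}, \ldots, a_{l+n} \rangle$ shows that $a_l$ has infinite order in $H_p$.

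\emph{Step 3: conclude.} Let $i \neq j$ and $k \neq 0$, and suppose $a_i^k$ were conjugate to $a_j^r$ for some $r$.
\begin{enumerate}
\item If $r = 0$, then $a_i^k = 1$, contradicting Step 2.
\item If $j < i$, apply $\pi_i$. The image of $a_j^r$ is $1$, while $\pi_i(a_i^k) = a_i^k \neq 1$ by Step 2. A conjugate of $1$ is $1$, so this is a contradiction.
\item If $j > i$ and $r \neq 0$, apply $\pi_j$ instead. Then $\pi_j(a_i^k) = 1$ but $\pi_j(a_j^r) = a_j^r \neq 1$, again a contradiction.
\end{enumerate}
Hence $a_i^k$ is not conjugate to any power of $a_j$.
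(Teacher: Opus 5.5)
Your proof is correct, and it takes a genuinely different route from the paper's.

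The paper splits $H_p$ as an amalgamated free product over $\langle a_1\rangle$, with generators of index $\le 1$ in one factor and $\ge 1$ in the other. It then applies the conjugacy theorem for amalgamated products (Magnus--Karrass--Solitar, Theorem 4.6) twice. This pushes a putative conjugacy between $a_1^k$ and $a_2^\ell$ down into $\langle a_1, a_2\rangle \cong \BS(1,p)$, where it fails because $\langle\langle a_1\rangle\rangle \cap \langle a_2\rangle = 1$. The case $j > i+1$ needs a separate argument, using Theorem 4.6(ii) together with the first case.

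You instead observe that killing every generator of index below $m$ is compatible with the relations. The boundary relation degenerates to $a_m \cdot 1 \cdot a_m^{-1} = 1$; killing from above would instead force $a_m^{p-1}=1$. So $\ker \pi_m = \langle\langle a_l : l < m\rangle\rangle$ is a normal subgroup containing every $a_l$ with $l<m$ but no nontrivial power of $a_m$. A normal subgroup is a union of conjugacy classes, so this separates $a_i^k$ from $a_j^r$ whichever of $i, j$ is larger.

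Your only structural input is that $a_m$ has infinite order in $H_p^{\geq m}$. The finite-window induction, with embedding of factors in amalgams and the compactness step, handles this correctly. The $r=0$ case could equally be handled via $\pi_i$.

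What each route buys:
\begin{itemize}
\item Your argument is shorter and uniform in $i,j$, with no case split. It avoids the conjugacy theorem for amalgams entirely.
\item It also proves slightly more: no nontrivial power of $a_m$ lies in the normal closure of $\{a_l : l < m\}$.
\item The paper's approach gives finer information about where conjugates live in the amalgam decomposition, for instance that all conjugates of $a_j^\ell$ stay in the upper factor.
\item The paper's approach also fits the amalgam-based machinery it reuses when computing $C_{G_p}(a)$.
\end{itemize}
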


\begin{proof}
	We can assume without loss of generality that \(i < j\). We consider
	separately the cases when \(j = i + 1\) and \(j > i + 1\).

	Case 1: \(j = i + 1\). We can assume without loss of generality that \(i =
	1\) and so \(j = 2\). Suppose for contradiction that \(a_1^k\) is conjugate
	to \(a_2^\ell\), where \(k, \ell \in \Z \setminus \{0\}\). Note that
	\(H_p\) splits as an amalgamated free product:
	\[
		H_p \cong \langle a_i \; (i \leq 1) \mid a_{i + 1} a_i a_{i + 1}^{-1} = a_i^p 
		\; (i < 1) \rangle \ast_{\Phi} 
		\langle c_i \; (i \geq 1) \mid c_{i + 1} c_i c_{i + 1}^{-1} = c_i^p
		\; (i \geq 1) \rangle,
	\]
	where \(\Phi \colon \langle a_1 \rangle \to \langle c_1 \rangle\) is
	defined by \(\Phi(a_1) = c_1\). Let \(K_1\) denoted the first group in this
	amalgamated free product and \(K_2\) denoted the second. Note that
	\(a_2^\ell\) is cyclically reduced in the sense of \cite[Theorem
	4.6]{MagnusKarrassSolitar}, and so \cite[Theorem
	4.6(i)]{MagnusKarrassSolitar} tells us that there is a sequence \(x_0 =
	a_1^k, x_1, \ldots, x_n = a_2^\ell\), where \(x_i \in \langle a_1 \rangle\)
	for all \(i \in \{1, \ldots, n - 1\}\) and \(x_i\) is conjugate in either
	\(K_1\) or \(K_2\) to \(x_{i + 1}\) for all \(i\). Since \(x_{n - 1} \in
	\langle a_1 \rangle\), and is not \(1\), by changing \(k\) if necessary, we
	can replace \(x_0\) with \(x_{n - 1}\), we can assume that \(n = 1\). Note
	that this new \(k\) cannot be \(0\) as \(x_{n - 1}\) is conjugate to
	\(x_0\) which was non-trivial. In particular, \(a_1^k\) is conjugate in
	either \(K_1\) or \(K_2\) to \(a_2^k\). Given \(a_2^k \notin K_1\), it must
	be that \(a_1\) and \(a_2\) are conjugate in \(K_2\).
	Next note that \(K_2\) splits as an amalgamated free product:
	\[
		K_2 \cong 
		\langle a_1, a_2 \mid a_2 a_1 a_2^{-1} = a_1^p \rangle \ast_{\Psi}
		\langle d_i \; (i \geq 2) \mid d_{i + 1} d_i d_{i + 1}^{-1} = d_i^p 
		\; (i \geq 2) \rangle
	\]
	where \(\Psi \colon \langle d_0 \rangle \to \langle a_0 \rangle\) is
	defined by \(\Psi(d_0) = a_0\). Using the same argument we used to show
	\(a_1^k\) and \(a_2^\ell\) were conjugate in \(K_2\) to apply \cite[Theorem
	4.6(i)]{MagnusKarrassSolitar} again to show that \(a_1^k\) and \(a_2^\ell\)
	are conjugate in \(\langle a_1, a_2 \mid a_2 a_1 a_2^{-1} = a_1^p \rangle\)
	(by again potentially changing \(\ell\) to some other non-zero integer).
	This is just \(\BS(1, p)\), and we have that \(\langle \langle a_1 \rangle
	\rangle \cap \langle a_2 \rangle = \{1\}\), and so \(a_1^k\) is not
	conjugate to \(a_2^k\), a contradiction.

	Case 2: \(j > i + 1\). Again, we can assume without loss of generality that
	\(i = 0\) and so \(j > 1\). The group \(H_p\)
	can be expressed as the amalgamated free product:
	\[
		H_p \cong \langle a_i \; (i \leq j - 1) \mid a_{i + 1} a_i a_{i + 1}^{-1} = a_i^p 
		\; (i < j - 1) \rangle \ast_{\Phi} 
		\langle c_i \; (i \geq j - 1) \mid c_{i + 1} c_i c_{i + 1}^{-1} = c_i^p
		\; (i \geq j - 1) \rangle,
	\]
	where \(\Phi \colon \langle a_{j - 1} \rangle \to \langle c_{j -1}
	\rangle\) is defined by \(\Phi(a_{j - 1}) = c_{j - 1}\). Call the first
	group in the above amalgamated free product \(K_1\) and the latter \(K_2\).
	Let \(\ell \in \Z\).  Noting that \(a_j^\ell\) is cyclically reduced in the
	sense of \cite[Theorem 4.6]{MagnusKarrassSolitar} and by Case 1
	\(a_j^\ell\) is not conjugate to any element in \(\langle a_{j - 1}
	\rangle\), we can thus apply \cite[Theorem 4.6(ii)]{MagnusKarrassSolitar}
	to conclude that all conjugates of \(a_j^\ell\) in \(H_p\) lie in \(K_2\).
	In particular, \(a_0^k\) is not conjugate to \(a_j^\ell\).
\end{proof}

\begin{lem}
		\label{conj-into-ab}
		Let \(H_p = \langle a_i \; (i \in \Z) \mid a_{i + 1} a_i a_{i + 1}^{-1} 
		= a_i^p \; (i \in \Z)\rangle\). If \(h \in \langle a_2^i a_1 a_2^{-i} \;
		(i \in \Z) \rangle\) is such that \(h a_0 h^{-1} \in \langle a_0, a_1
		\rangle\), then \(h \in \langle a_1 \rangle\).
\end{lem}

\begin{proof}
	It suffices to work in the subgroup \(K = \langle a_0, a_1, a_2 \rangle\) of
	\(H_p\). It can be noted that this sits inside \(H_p\) as a free product
	with amalgamation (as discussed in the proof of \cref{cents}), and thus
	\(K\) admits the presentation
	\[
		\langle a_0, a_1, a_2 \mid a_1 a_0 a_1^{-1} = a_0^p, a_2 a_1 a_2^{-1}
		= a_1^p \rangle.
	\]
	This is isomorphic (albeit replacing \(a_i\) with \(a_{i + 1}\)) to the
	group discussed in \cref{L_2-norm-form}, and thus we can use this normal
	form for \(K\). Let \(h \in \langle a_2^i a_1 a_2^{-i} \; (i \in \Z)
	\rangle\). Since this lies in the Baumslag-Solitar subgroup
	\(\langle a_1, a_2\rangle\), we can write \(h\) using the normal
	form
	\[
			NF = \{ a_2^{-i} a_1^j a_2^k \mid i, k \in \Z_{> 0}, j \in \Z \setminus p\Z \} 
			\cup \{a_2^{-i} a_1^j a_2^k \mid i, k \in \Z_{\geq 0}, ik = 0, j \in \Z\}
	\]
	for \(\BS(1, p)\) from \cite[Section~2]{BurilloElder}.

	Case 1: \(h = a_2^{-i} a_1^j a_2^k\) for some \(i, k \in \Z_{> 0}\) and \(j
	\in \Z \setminus p \Z\). Since the relations in \(\langle a_1, a_2 \rangle
	\cong \BS(1, p)\) preserve the exponent-sum of \(a_2\), and elements of
	\(\langle a_2^i a_1 a_2^{-i} \; (i \in \Z) \rangle\) have an \(a_2\)-exponent-sum
	of \(0\), so must \(h\). It follows that \(k = i\), and so
	\(h = a_2^{-i} a_1^j a_2^i\). Then
	\[
		ha_0 h^{-1} = a_2^{-i} a_1^j a_2^i a_0 a_2^{-i} a_1^{-j} a_2^i.
	\]
	Let \(T_A\) and \(T_B\) are defined as in the
	statement of \cref{L_2-norm-form} (except with \(a_i\) replaced by \(a_{i + 1}\) to match
	our presentation for \(K\)). We convert the above word for \(ha_0 h^{-1}\)
	into the normal form from \cref{L_2-norm-form}.  Write \(j = q p^i +  j'\), where \(q \in
	\Z\) and \(j' \in \{0, \ldots, p^i - 1\}\). Then 
	\begin{align*}
		ha_0h^{-1}
		& = a_2^{-i} a_1^j a_2^i a_0 a_2^{-i} a_1^{-j} a_2^i & &
		\\
		& = a_2^{-i} a_1^{j' + q p^i} a_2^i a_0 a_2^{-i} a_1^{(p^i -j') - (q + 1)p^i} a_2^i & & {\color{black} \mbox{(using the fact that \(j = j' + qp^i\)) }} \\
		& = a_1^q a_2^{-i} a_1^{j'} a_2^i a_0 a_1^{-(q + 1)} a_2^{-i} a_1^{p^i -j'} a_2^i & & 
		{\color{black} \mbox{(since \(a_2^{-i} a_1^{rp^i} = a_1^r a_2^{-i}\) for all \(r \in \Z\))}} \\
		& = a_1^q a_2^{-i} a_1^{j'} a_2^i a_1^{-(q + 1)} a_0^{p^{(q + 1)}} a_2^{-i} a_1^{p^i - j'} a_2^i 
		&& {\color{black} \mbox{(since \(a_0 a_1^{-(q + 1)} = a_1^{-(q + 1)} a_0^{p^{q + 1}}\))}} \\
		& = a_1^q a_2^{-i} a_1^{j'} a_1^{-(q + 1)p^i} a_2^i a_0^{p^{(q + 1)}} a_2^{-i} a_1^{p^i - j'} a_2^i 
		&& {\color{black} \mbox{(since \(a_2^i a_1^{-(q + 1)} = a_1^{-(q + 1)p^i} a_2^i\))}} \\
		& = a_1^{-1} a_2^{-i} a_1^{j'} a_2^i a_0^{p^{(q + 1)}} a_2^{-i} a_1^{p^i - j'} a_2^i 
		&& {\color{black} \mbox{(since \(a_2^{-i} a_1^{-(q + 1)p^i} = a_1^{-(q + 1)} a_2^{-i}\)).}} 
	\end{align*}
	We now have that \(a_1^{-1} \in \langle a_1 \rangle\), \(a_2^{-i} a_1^{j'}
	a_2^i \in T_A\), \( a_2^{-i} a_1^{p^i - j'} a_2^i \in T_A\) and
	\(a_0^{p^{(q + 1)}} \in T_B\). The above word is thus in the normal form
	from \cref{L_2-norm-form}, and thus can only represent an element of
	\(\langle a_0, a_1 \rangle\) if it does not contain an occurrence of
	\(a_2\). This forces \(i = 0\), which forces \(h = a_1^j\), as required.

	Case 2: \(h = a_2^{-i} a_1^j a_2^k\) for some \(i, k \in \Z_{\geq 0}\) with
	\(ik = 0\) and \(j \in \Z\). By the argument from the first sentence of
	Case 1, we must have that \(i = k\), and therefore \(i = k = 0\). It
	follows that \(h = a_1^j\), as required.
\end{proof}

We now begin to work in \(G_p\) itself. We start with a description of
\(G_p\) as a semidirect product of an infinite presentation with \(\Z\).

\begin{lem}
	\label{inf-pres}
	Let \(G_p = \langle a, b, t \mid bab^{-1} = a^p, tat^{-1} = b\rangle\), where
	\(p \in \Z\) is such that \(p > 1\). Let
	\[
	 H_p = 
			\langle a_i \; (i \in \Z) \mid a_{i + 1} a_i
	a_{i + 1}^{-1} = a_i^p \; (i \in \Z)\rangle.
	\]
	Then \(G \cong H_p \rtimes_\varphi \Z\), where \(\varphi\) is defined by
	\(\varphi(a_i) = a_{i + 1}\) for all \(i \in \Z\).
\end{lem}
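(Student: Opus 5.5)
The plan is to construct mutually inverse homomorphisms between \(G_p = \langle a, b, t \mid bab^{-1} = a^p,\ tat^{-1} = b\rangle\) and \(H_p \rtimes_\varphi \Z\). We write the \(\Z\) factor as \(\langle s \rangle\), with \(s h s^{-1} = \varphi(h)\).

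First we check that \(\varphi\) is an automorphism of \(H_p\). The assignment \(a_i \mapsto a_{i+1}\) sends the relator \(a_{i+1}a_ia_{i+1}^{-1}a_i^{-p}\) to \(a_{i+2}a_{i+1}a_{i+2}^{-1}a_{i+1}^{-p}\), which is again a relator. So \(\varphi\) is a well-defined endomorphism. The map \(a_i \mapsto a_{i-1}\) is well defined for the same reason and is a two-sided inverse, so the semidirect product makes sense. It has the presentation
\[
\langle a_i\ (i\in\Z),\ s \mid a_{i+1}a_ia_{i+1}^{-1} = a_i^p,\ s a_i s^{-1} = a_{i+1}\ (i \in \Z)\rangle .
\]

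Next we define \(\Psi \colon G_p \to H_p \rtimes \Z\) by \(a \mapsto a_0\), \(b \mapsto a_1\), \(t \mapsto s\). Both relations hold in the target:
\[
a_1a_0a_1^{-1} = a_0^p, \qquad s a_0 s^{-1} = a_1 .
\]
In the other direction we define \(\Theta\) by \(a_i \mapsto t^i a t^{-i}\) and \(s \mapsto t\). The relation \(s a_i s^{-1} = a_{i+1}\) is clearly preserved. For the relation \(a_{i+1}a_ia_{i+1}^{-1} = a_i^p\), its image is the \(t^i\)-conjugate of \((tat^{-1})a(tat^{-1})^{-1} = a^p\), i.e. of \(bab^{-1} = a^p\), which holds in \(G_p\).

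Finally we verify that the two composites are the identity on generators. For \(\Theta\Psi\): \(a \mapsto a\), \(b \mapsto tat^{-1} = b\), and \(t \mapsto t\). For \(\Psi\Theta\): \(a_i \mapsto s^i a_0 s^{-i} = a_i\) and \(s \mapsto s\). Hence \(\Psi\) is an isomorphism.

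There is no real obstacle here. The only point needing a little care is the standard presentation of a semidirect product, and the check that conjugating the defining relation by \(t^i\) produces exactly the \(i\)th relation of \(H_p\). We also note that eliminating \(b\) via \(b = tat^{-1}\) identifies this presentation of \(G_p\) with the one-relator presentation in \cref{main-thm}.
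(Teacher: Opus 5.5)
Your proof is correct and is essentially the paper's argument: the paper introduces the redundant generators \(a_i = t^i a t^{-i}\) and relators \(a_{i+1}a_ia_{i+1}^{-1} = a_i^p\) by Tietze transformations and then recognises the standard presentation of \(H_p \rtimes_\varphi \Z\), which is the same identification you make explicit with the mutually inverse maps \(\Psi\) and \(\Theta\). Your version is slightly more thorough, since it also checks that \(\varphi\) is an automorphism, which is what makes the semidirect product and its presentation legitimate; the paper leaves this point implicit.
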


\begin{proof}
	 We begin by viewing \(G_p\) with the presentation \(\langle a, t \mid
	 (tat^{-1}) a (tat^{-1})^{-1} = a^p \rangle\). We then replace \(a\) with
	 \(a_0\), and introduce redundant generators \(a_i = t^i a t^{-i}\) for all
	 \(i \in \Z\), and redundant relators \(a_{i + 1} a_i a_{i + 1}^{-1} =
	 a_i^p\) for all \(i \in \Z \setminus \{0\}\), to obtain the following
	 presentation for \(G_p\):
	\[
		\langle t, a_i \; (i \in \Z) \mid a_{i + 1} a_i
		a_{i + 1}^{-1} = a_i^p, t a_i t^{-1} = a_{i + 1} \; (i \in \Z)\rangle
\]
	 We finally notice that this presentation is of the form \(H_p \rtimes_\varphi \Z\).
\end{proof}

The following technical lemma allows us to compute a centraliser in \(G_p\).
\begin{lem}
	\label{pre-cents}
	Let \(G_p = \langle a, b, t \mid bab^{-1} = a^p, tat^{-1} = b\rangle\),
	where \(p \in \Z\) is such that \(p > 1\). Let \(T_B\) denote
	the right transversal for \(\langle a \rangle\) inside the Baumslag-Solitar subgroup \(\langle t^{-1} a t , a\rangle\),
	as defined in \cref{L_2-norm-form}. Let \(q \in \Z \setminus
	\{0\}\).  If \(y a^q = a^s y\) for some \(y \in T_B\) and \(s \in \Z\),
	then \(y = 1\) and \(s = q\).
\end{lem}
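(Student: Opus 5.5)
The plan is to work entirely inside the Baumslag--Solitar subgroup \(\langle t^{-1}at, a\rangle\) and use the centraliser computation of \cref{BS-cents}. Writing \(a_{-1} = t^{-1}at\) and \(a_0 = a\) as in \cref{inf-pres}, the relation \(a_0 a_{-1} a_0^{-1} = a_{-1}^p\) holds. The subgroup \(\langle a_{-1}, a_0\rangle\) of \(G_p\) is isomorphic to \(\BS(1,p) = \langle a, b \mid bab^{-1} = a^p\rangle\) via \(a \mapsto a_{-1}\), \(b \mapsto a_0\). This is exactly the setting of \cref{L_2-norm-form}: the subgroup sits inside \(L_2\) as a factor of the amalgamated free product, and \(L_2\) sits inside \(H_p \le G_p\) via the amalgam decompositions used in \cref{not-conj}. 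Under this identification, \(T_B\) is the right transversal for \(\langle b\rangle = \langle a_0\rangle\) from \cref{transversals}.

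First I rewrite the hypothesis \(y a_0^q = a_0^s y\) as \(y a_0^q y^{-1} = a_0^s\), an equation in \(\BS(1,p)\) with \(a_0\) playing the role of \(b\). The map \(\BS(1,p) \to \Z\) sending \(a \mapsto 0\) and \(b \mapsto 1\) is a well-defined homomorphism, since the defining relation has zero \(b\)-exponent sum. Applying it to both sides gives \(q = s\), because conjugation does not change the image in the abelian quotient.

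With \(s = q\), \(y\) commutes with \(a_0^q = b^q\), where \(q \neq 0\). By \cref{BS-cents}\eqref{item:BS-cent-b}, \(C_{\BS(1,p)}(b^q) = \langle b\rangle\), so \(y \in \langle a_0\rangle\). Thus \(y\) lies in the trivial right coset \(\langle a_0\rangle \cdot 1\). Since \(1 = a_{-1}^0 \in T_B\) and \(T_B\) contains exactly one representative of each right coset of \(\langle a_0\rangle\) (\cref{transversals}), we conclude \(y = 1\).

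The only step requiring care is justifying that \(\langle a_{-1}, a_0\rangle \le G_p\) really is a faithful copy of \(\BS(1,p)\), so that \cref{BS-cents} and \cref{transversals} apply inside \(G_p\). For this I would cite the amalgamated product decompositions of \(H_p\) and of \(L_2\) already used in \cref{not-conj} and \cref{L_2-norm-form}: factors embed in amalgamated free products, and \(H_p\) embeds in \(H_p \rtimes \Z \cong G_p\). The rest is routine.
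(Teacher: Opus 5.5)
Your proposal is correct and follows the paper's proof essentially step for step: map \(\langle a_{-1}, a_0\rangle\) onto \(\Z\) by killing \(a_{-1}\) to get \(s = q\), then apply \cref{BS-cents}\eqref{item:BS-cent-b} to place \(y\) in \(\langle a_0\rangle\), and conclude \(y = 1\) from \(T_B \cap \langle a_0\rangle = \{1\}\). Your extra justification that \(\langle a_{-1}, a_0\rangle\) is a faithful copy of \(\BS(1,p)\) inside \(G_p\) covers a point the paper takes for granted.
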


\begin{proof}
	First note that since \(y\) and \(a^q\) both lie in the Baumslag-Solitar
	subgroup \(\langle t^{-1} a t, a \rangle\), it suffices to work in this
	subgroup. As with \(H_p\), write \(a_0 = a\) and \(a_{-1} = t^{-1} a t\).
	By applying the homomorphism from \(\langle a_{-1}, a_0 \rangle\)
	onto \(\Z\) by mapping \(a_0\) to the generator \(c\) and mapping
	\(a_{-1}\) to the identity, we can see that
	\(ya_0^q = a_0^s y\) implies that \(c^q = c^s\), and thus
	\(s = q\). We thus have that \(y a_0^q = a_0^q y\), and so
	\(y \in C_{\langle a_{-1}, a_0 \rangle}(a_0^q)\),
	and thus \cref{BS-cents}\eqref{item:BS-cent-b} tells us that
	\(y \in \langle a_0 \rangle\). But \(T_B \cap \langle a_0 \rangle
	= \{1\}\), and so \(y = 1\).
\end{proof}

We compute the centraliser of \(a\) in \(G_p\). This allows us to later show that
	 the Baumslag-Solitar subgroup \(\langle a, b \rangle\) is equationally definable
	 in \(G_p\).
\begin{lem}
		\label{cents}
		Let \(G_p = \langle a, b, t \mid bab^{-1} = a^p, tat^{-1} = b\rangle\),
		where \(p \in \Z\) is such that \(p > 1\).  Then \(C_{G_p}(a) = \langle
		b^{-i} a b^i \; (i  \in \Z_{\geq 0}) \rangle\).
\end{lem}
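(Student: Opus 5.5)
The plan is to work in the decomposition \(G_p \cong H_p \rtimes_\varphi \Z\) from \cref{inf-pres}, with \(a = a_0\) and \(b = a_1\). The target subgroup is then \(\langle a_1^{-i} a_0 a_1^{i} \rangle\), the normal closure of \(a_0\) inside \(\langle a_0, a_1\rangle \cong \BS(1,p)\).

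The inclusion \(\supseteq\) follows from \cref{BS-cents}\eqref{item:BS-cent-a} applied in \(\langle a_0, a_1 \rangle\). For the reverse inclusion, take \(g = h t^m\) with \(h \in H_p\) and suppose \(g a_0 g^{-1} = a_0\). Then \(h a_m h^{-1} = a_0\). By \cref{not-conj}, \(a_m\) is not conjugate to \(a_0\) unless \(m = 0\), so \(g = h \in H_p\) and it suffices to compute \(C_{H_p}(a_0)\).

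To do this, split \(H_p\) as an amalgamated free product
\[
H_p = K_1 \ast_{\langle a_0 \rangle} K_2,
\]
where \(K_1 = \langle a_i \; (i \leq 0)\rangle\) and \(K_2 = \langle a_i \; (i \geq 0)\rangle\). This splitting is as in the proof of \cref{not-conj}.

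The next step is to show \(C_{H_p}(a_0) \subseteq K_2\). I would use the normal form for amalgamated free products, in the style of \cref{L_2-norm-form}. Write \(h = a_0^k x_1 y_1 \cdots x_n y_n\) with \(x_i\) in a right transversal of \(\langle a_0\rangle\) in \(K_2\) and \(y_i\) in a right transversal of \(\langle a_0\rangle\) in \(K_1\). If some \(y_i \neq 1\), compare the normal forms of \(h a_0\) and \(a_0 h\), pushing \(a_0\) through from the right. The first syllable encountered is \(y_n\), lying in \(K_1 = \langle a_{-1}, a_0, \dots\rangle\), whose relevant piece is \(\BS(1,p)\) on \(a_{-1}, a_0\).

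If \(y_n \neq 1\), then \(y_n a_0 = a_0^s y'\) with \(y'\) a transversal element. By uniqueness of normal forms, matching with \(a_0 h\) forces \(y_n a_0 = a_0^s y_n\). \cref{pre-cents} then gives \(y_n = 1\), a contradiction. Induct leftwards through the \(x_i\) as well, or apply the same argument symmetrically. The conclusion is that \(h \in K_2\).

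This normal form bookkeeping is the main obstacle. In \(K_2\), conjugating \(a_0\) by \(x_i \in K_2\) can produce \(a_0^{p^r}\) rather than \(a_0\) itself, so the inductive matching must track these powers. I would handle this by working in the Bass–Serre tree: \(a_0\) fixes an edge, and the argument from \cref{pre-cents} shows \(a_0\) fixes no edge on the \(K_1\) side other than the base edge. Hence the fixed subtree of \(a_0\) lies in the \(K_2\) side, and any \(h\) commuting with \(a_0\) preserves that subtree.

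Once \(h \in K_2\), split \(K_2\) again as
\[
K_2 = \langle a_0, a_1 \rangle \ast_{\langle a_1\rangle} \langle a_i \; (i \geq 1)\rangle.
\]
By the same argument, using that \(a_0\) is not conjugate into \(\langle a_1\rangle\) (\cref{not-conj}), the centraliser of \(a_0\) lies in the vertex group \(\langle a_0, a_1\rangle\). \cref{BS-cents}\eqref{item:BS-cent-a} then finishes the proof.

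Alternatively, one could first show \(h \in \langle a_0,a_1,a_2\rangle\) and use \cref{conj-into-ab} together with \cref{L_2-norm-form}, which seems to be the intended route via the subgroup \(K\).
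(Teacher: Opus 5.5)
Your argument is sound in outline and uses the same ingredients as the paper (\cref{not-conj}, \cref{pre-cents}, \cref{BS-cents}, and a splitting of \(H_p\) with \(a_0\) in the edge group), but in the opposite order. The paper first peels off the outer parts of \(H_p\) with two splittings in which \(a_0\) is \emph{not} conjugate into the edge group (over \(\langle a_1\rangle\), then over \(\langle a_{-1}\rangle\)). This lands it in \(L_2=\langle a_{-1},a_0\rangle\ast_{\langle a_0\rangle}\langle a_0,a_1\rangle\). Only there does it do the delicate step with \(a_0\) in the edge group, using the explicit transversals of \cref{L_2-norm-form}, so \cref{pre-cents} applies verbatim. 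You do the delicate step first, in \(K_1\ast_{\langle a_0\rangle}K_2\) with infinitely generated factors, and afterwards need only the easy splitting of \(K_2\) over \(\langle a_1\rangle\). Your Bass--Serre formulation is a tidy replacement for the paper's normal-form induction. The ``tracking of powers'' that worries you is absorbed by the fact that \cref{pre-cents} holds for every \(a_0^q\) with \(q\neq 0\).

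Two points must be tightened. First, \cref{pre-cents} concerns the transversal in \(\langle a_{-1},a_0\rangle\), but your \(y_n\) lies in \(K_1=\langle a_i : i\le 0\rangle\). You need: if \(y\in K_1\), \(q\neq 0\) and \(y a_0^q y^{-1}\in\langle a_0\rangle\), then \(y\in\langle a_0\rangle\). To get this, split \(K_1=\langle a_{-1},a_0\rangle\ast_{\langle a_{-1}\rangle}\langle a_i : i\le -1\rangle\). By \cref{not-conj}, nonzero powers of \(a_0\) fix only the vertex \(\langle a_{-1},a_0\rangle\) of that tree, so \(y\in\langle a_{-1},a_0\rangle\). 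Then the argument of \cref{pre-cents} (exponent sum plus \cref{BS-cents}\eqref{item:BS-cent-b}) applies. This is precisely the paper's second reduction, so your reordering does not avoid it.

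Second, knowing that \(a_0\) fixes no other edge at the vertex \(K_1\) does not yet give \(h\in K_2\). The set \(\mathrm{Fix}(a_0)\) also contains other \(K_1\)-type vertices, e.g.\ \(a_1^{-1}K_1\) (since \(a_1a_0a_1^{-1}=a_0^p\)). If the fixed tree continued past such a vertex to another \(K_2\)-type vertex, \(h\) could move \(K_2\). What you need is that every \(K_1\)-type vertex \(gK_1\) of \(\mathrm{Fix}(a_0)\) is a leaf. Choosing \(g\) so that \(gC\) is a fixed edge, we have \(g^{-1}a_0g=a_0^s\), and the leaf property is the statement above with \(q=s\). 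It follows that \(K_2\) is the unique \(K_2\)-type vertex of \(\mathrm{Fix}(a_0)\): a geodesic between two such vertices would pass through a \(K_1\)-type vertex with two fixed edges. Since \(h\) preserves \(\mathrm{Fix}(a_0)\) and vertex types, \(h\in K_2\).

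In the normal-form version, ``apply the same argument symmetrically'' to the \(x_i\) would be wrong, since \(a_1a_0=a_0^pa_1\) in \(K_2\); the paper's induction instead handles the \(x_i\) by carrying general powers \(a_0^{k'}\). Finally, your closing guess about the intended route is off. The paper reduces to \(\langle a_{-1},a_0,a_1\rangle\), not \(\langle a_0,a_1,a_2\rangle\), and \cref{conj-into-ab} plays no role in this lemma; it is used later, for \cref{defable}\eqref{item:b-defable}.
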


\begin{proof}
		As with \cref{pre-cents}, we will view \(G_p\) as \(H_p \rtimes_\phi \Z\) from
	\cref{inf-pres}, where \(t\) is the generator for \(\Z\).  We first
	show that \(C_{G_p}(a_0) = C_{H_p}(a_0)\). Let \(h t^k \in C_{G_p}(a_0)\),
	where \(h \in H_p\) and \(k \in \Z\). Then \(a_0 h t^k =_{G_p} ht^k a_0
	=_{G_p} h a_k t^k\), and so \(a_0 =_{H_p} h a_k h^{-1}\). If \(k \neq 0\),
	then \(a_0\) and \(a_k\) are conjugate in \(H_p\), which contradicts
	\cref{not-conj}, and so \(k = 0\).  We have thus shown that \(ht^k = h \in
	H_p\), and so \(C_{G_p}(a_0) \subseteq C_{H_p}(a_0)\). We thus need to
	compute this latter centraliser. First note that we can view \(H_p\) as a
	free product with amalgamation:
	\[
		H_p \cong \langle a_i \; (i \leq 1) \mid a_{i + 1} a_i a_{i + 1}^{-1} = a_i^p 
		\; (i < 1) \rangle \ast_{\Phi} 
		\langle c_i \; (i \geq 1) \mid c_{i + 1} c_i c_{i + 1}^{-1} = c_i^p
		\; (i \geq 1) \rangle,
	\]
	where \(\Phi \colon \langle a_1 \rangle \to \langle c_1 \rangle\) is the
	isomorphism defined by \(a_1 \mapsto c_1\). Call the first group in this
	free product with amalgamation \(K_1\) and the latter \(K_2\).  By
	\cref{not-conj}, \(a_0\) is not conjugate to any element in \(\langle a_1
	\rangle\). Thus \cite[Proposition 2.3]{BarkThesis} tells us that
	\(C_{H_p}(a_0) = C_{K_1}(a_0)\).  Looking inside \(K_1\), we have that
	\(K_1\) can also be realised as a free product with amalgamation:
	\[
		K_1 \cong 
		\langle d_i \; (i \leq -1) \mid d_{i + 1} d_i d_{i + 1}^{-1} 
		\; (i <  -1) \rangle \ast_{\Psi} \langle a_{-1}, a_0, a_1 \mid
		a_0 a_{-1} a_0^{-1} = a_{-1}^p, a_1 a_0 a_1^{-1} = a_0^p \rangle,
	\]
	where \(\Psi \colon \langle d_{-1} \rangle \to \langle a_{-1} \rangle\) is
	the isomorphism defined by \(d_{-1} \mapsto a_{-1}\).  Call the first group
	\(L_1\) and the second \(L_2\). By \cref{not-conj}, \(a_0\) is not
	conjugate to any element of \(\langle a_{-1} \rangle \) in \(H_p\), and so
	we can apply \cite[Proposition 2.3]{BarkThesis} to conclude that
	\(C_{K_1}(a_0) = C_{L_2}(a_0)\). 

	We next show that \(C_{L_2}(a_0) = C_{\langle a_0, a_1 \rangle}(a_0)\). To
	do so, we use the normal form from \cref{L_2-norm-form}.  Let \(a_0^k x_1
	y_1 \cdots x_n y_n\) be a normal form word representing an element \(g \in
	C_{L_2}(a_0)\). Then \(a_0 g\) has normal form \(a_0^{k + 1} x_1 y_1 \cdots
	x_n y_n\). We next compute the normal form for \(g a_0\). We have \(g a_0
	=_{L_2} a_0^k x_1 y_1 \cdots x_n y_n a_0\). Note that since
	\(T_A\) and \(T_B\) are right transversals for \(\langle a_0\rangle\)
	inside \(\langle a_0, a_1 \rangle\) and \(\langle a_{-1}, a_0 \rangle\),
	respectively, we have that for all \(x \in T_A\) and \(y \in T_B\),
	\(x a_0 = a_0^r x'\) and \(y a_0 = a_0^s y'\) for some \(x' \in T_A\),
	\(y' \in T_B\) and \(r, s \in \Z\). We will show the following
	
	We now use (i) and (ii) to prove by induction on \(n\) that if \(a_0^k x_1
	y_1 \cdots x_n y_n = x_1 y_1 \cdots x_n y_n a_0^{k'}\) for some \(k, k' \in
	\Z \setminus \{0\}\), then \(n = 1\) and \(y_1 = 1\). If \(n = 1\), then
	\[
			a_0^{k} x_1 y_1 = x_1 y_1 a_0^{k'} = x_1 a_0^s y_1' =
		a_0^{r} x_1' y_1',
	\]
	for some \(r \in \Z\), \(x_1' \in T_A\) and \(y_1' \in T_B\). As this is in
	normal form, this forces \(y_1 = y_1'\), and so \(y_1 a_0^{k'} = a_0^s
	y_1'\). 
	By \cref{pre-cents}, we have that \(y_1 = 1\), as required.

	Now suppose \(n > 1\) and the result holds for all \(n' < n\). Then,
	(writing \(k' = r_n\))
	\begin{align*}
		a_0^{k} x_1 y_1 \cdots x_n y_n 
		& = x_1 y_1 \cdots x_n y_na_0^{k'} \\
		& = x_1 y_1 \cdots x_n y_na_0^{r_n} \\
		& = x_1 y_1 \cdots x_n a_0^{s_{n}} y_n' \\
		& = x_1 y_1 \cdots a_0^{r_{n - 1}} x_n' y_n' \\
		& = \cdots \\
		& = a_0^{r_{0}} x_1' y_1' \cdots x_n' y_n',
	\end{align*}
	where \(x_1', \ldots, x_n' \in T_A\), \(y_1', \ldots, y_n' \in T_B\) and
	\(s_1,r_1, \ldots, s_{n}, r_n \in \Z\) are defined inductively by \(y_i
	a_0^{r_i} = a_0^{r_i} y_i'  \) and \(x_i a_0^{s_i} = a_0^{r_{i - 1}}
	x_i'\), noting that these are all uniquely determined (and exist) as
	\(T_A\) and \(T_B\) are right transversals of \(\langle a_0 \rangle\) in
	\(\langle a_0, a_1 \rangle\) and \(\langle a_{-1}, a_0 \rangle\),
	respectively. Furthermore, note that if \(x_i \neq 1\), then \(x_i' \neq
	1\), as otherwise we would have \(x_i = a_0^{r_{i - 1} - s_i} \in T_A \cap
	\langle a_0 \rangle = \{1\}\), a contradiction. Similarly, \(y_i \neq 1\)
	implies that \(y_i' \neq 1\). This word is thus is in the normal form from
	\cref{L_2-norm-form}, it must equal \(a_0^{k} x_1 y_1 \cdots x_n y_n\) as a
	word, and so \(x_i = x_i'\) and \(y_i = y_i'\) for all \(i\). In
	particular, \(y_n a_0^{k'} = a_0^s y_n\), which again (i) and (ii) tell us
	implies \(y_n = 1\). This then implies that \(a_0^{k} x_1 y_1 \cdots x_n =
	x_1 y_1 \cdots y_{n - 1} a_0^s x_n\), and so \(a_0^{k} x_1 y_1 \cdots x_{n
	- 1} y_{n - 1} = x_1 y_1 \cdots x_{n - 1} y_{n - 1} a_0^s\). We can now
	apply induction to conclude that \(n - 1 = 1\) and \(y_{n - 1} = 1\). But
	the structure of our normal form for \(g\) states that \(y_i \neq 1\) for
	all \(i < n\), a contradiction. Thus this case of \(n > 1\) does not occur.
	
	 We have now shown that \(g = a_0^k x_1\) for some \(k \in \Z\) and \(x_1
	 \in T_A\), and so \(g \in \langle a_0, a_1 \rangle\), and so
	 \(C_{G_p}(a_0) = C_{\langle a_0, a_1 \rangle}(a_0)\). Since \(\langle a_0,
	 a_1 \rangle\) is isomorphic to \(\operatorname{BS}(1, n)\), we can simply
	 use \cref{BS-cents}\eqref{item:BS-cent-a} to conclude that \(C_{G_p}(a_0)
	 = \langle a_1^{-i} a_0 a_1^{i} \; (i \in \Z_{\geq 0}) \rangle\).
\end{proof}

We can now show that the sets we require are definable in \(G_p\). These sets
include the two divisibility conditions over \(\Z\) that we need to encode
Denef's result on the undecidability of the Diophantine problem in
\((\Z, +, |, |^p)\) \cite{Denef}.

\begin{lem}
	\label{defable}
	The following sets are definable in the positive existential theory of
	\(G_p = \langle a, b, t \mid bab^{-1} = a^p, tat^{-1} = b\rangle\): 
	\begin{enumerate}
			\item \(\langle b \rangle\);
					\label{item:b-defable}
			\item \(\langle a \rangle\);
					\label{item:a-defable}
			\item \(\Mon \langle b \rangle\);
					\label{item:mon-b-defable}
			\item \(\langle a, b \rangle\);
					\label{item:ab-defable}
			\item \(\{(b^i, b^{ij}) \mid i, j \in \Z\}\);
					\label{item:b-div-defable}
			\item \(\{(b^i, b^{i p^r}) \mid i \in \Z, r \in \Z_{\geq 0}\}\);
					\label{item:b-divp-pre-defable}
			\item \(\{(b^i, b^{\pm i p^r}) \mid i \in \Z, r \in \Z_{\geq 0}\}\).
					\label{item:b-divp-defable}
	\end{enumerate}
\end{lem}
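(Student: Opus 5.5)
The plan is to build every set out of centralisers, which are equationally definable, together with \cref{cents} and \cref{conj-into-ab}. Throughout, write \(a_i = t^i a t^{-i}\) as in \cref{inf-pres}, so that \(a = a_0\) and \(b = a_1\). By \cref{cents}, \(C_{G_p}(a) = \langle b^{-i} a b^{i} \rangle\), which is the copy of \(\Z[1/p]\) in \(\langle a, b\rangle \cong \BS(1,p)\). It is equationally definable by \(Xa = aX\). Conjugating by \(t\) gives \(C_{G_p}(b) = tC_{G_p}(a)t^{-1} = \langle a_2^{i} a_1 a_2^{-i} \rangle\), defined by \(Xb = bX\).

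For \eqref{item:b-defable}, I would take the system
\[
(Xb = bX) \wedge (XaX^{-1} = Z) \wedge (Za = aZ).
\]
It defines \(\{h \in C_{G_p}(b) : hah^{-1} \in C_{G_p}(a)\}\).
\begin{itemize}
\item Every \(b^k\) lies in this set. For \(k \ge 0\) we get \(b^k a b^{-k} = a^{p^k}\). For \(k<0\) we get a conjugate \(b^{-i}ab^{i}\). Both lie in \(C_{G_p}(a)\).
\item Conversely, suppose \(h \in \langle a_2^i a_1 a_2^{-i}\rangle\) and \(h a_0 h^{-1} \in C_{G_p}(a_0)\). Since \(C_{G_p}(a_0) \subseteq \langle a_0, a_1\rangle\), \cref{conj-into-ab} applies after shifting indices by the automorphism \(\varphi\), and it gives \(h \in \langle a_1 \rangle = \langle b \rangle\).
\end{itemize}
Conjugating by \(t^{-1}\) then gives \eqref{item:a-defable}: \(X \in \langle a\rangle\) iff \(tXt^{-1} \in \langle b \rangle\).

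For \eqref{item:mon-b-defable}, I would define \(\Mon\langle b\rangle\) as \(\{h \in \langle b\rangle : hah^{-1} \in \langle a\rangle\}\). This works because \(b^kab^{-k} = a^{p^k}\) for \(k \geq 0\), while for \(k<0\) the element \(b^{k}ab^{-k}\) corresponds to \(p^{k} \notin \Z\) in \(\Z[1/p]\), so it does not lie in \(\langle a \rangle\). For \eqref{item:ab-defable}, every element of \(\BS(1,p) = \langle a,b\rangle\) can be written as \(zb^k\) with \(z\) in the normal closure of \(a\), and that normal closure is \(C_{G_p}(a)\). Hence \(\langle a,b\rangle = \{XY : X \in C_{G_p}(a),\ Y \in \langle b\rangle\}\), which is definable by \eqref{item:b-defable}.

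For \eqref{item:b-div-defable}, given \(X = b^i\) with \(i \neq 0\), I would use \cref{BS-cents}\eqref{item:BS-cent-abi} inside \(\langle a,b\rangle\): the centraliser of \(aX\) in \(\langle a, b\rangle\) is \(\langle ab^i\rangle\). So I require \(Z \in \langle a,b\rangle\) with \(Z(aX) = (aX)Z\), which forces \(Z = (ab^i)^j\). I then require \(Y \in \langle b\rangle\) with \(ZY^{-1} \in C_{G_p}(a)\). Since \(C_{G_p}(a)\) is the kernel of the exponent-sum map \(\langle a,b\rangle \to \Z\), this forces \(Y = b^{ij}\). The case \(i = 0\) is added as the disjunct \((X = 1)\wedge(Y=1)\), so this set is positive-existentially definable.

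For \eqref{item:b-divp-pre-defable}, given \(X, Y \in \langle b\rangle\), I would pass to \(\langle a \rangle\) via \(X' = t^{-1}Xt\) and \(Y' = t^{-1}Yt\), so that \(X' = a^i\) and \(Y' = a^{m}\) where \(X = b^i\) and \(Y = b^m\). I then require \(W \in \Mon\langle b\rangle\) with \(WX'W^{-1} = Y'\). Since \(b^r a^i b^{-r} = a^{ip^r}\) for \(r \geq 0\), this yields exactly the pairs with \(Y = b^{ip^r}\). Finally \eqref{item:b-divp-defable} is the disjunction of \eqref{item:b-divp-pre-defable} applied to \((X,Y)\) and to \((X,Y^{-1})\).

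The main obstacle is the converse inclusion in \eqref{item:b-defable}. It needs the containment \(C_{G_p}(a)\subseteq\langle a,b\rangle\) together with a correct index shift of \cref{conj-into-ab}. I also need to check that the membership conditions used in \eqref{item:b-div-defable}, namely working inside \(\langle a,b\rangle\) and restricting centralisers to it, are genuinely captured by the definable set \eqref{item:ab-defable}.
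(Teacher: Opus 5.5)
Your proposal is correct. For items (1)–(4), (6) and (7) it is essentially the paper's argument:
\begin{itemize}
\item the system $Xb=bX$, $XaX^{-1}\in C_{G_p}(a)$ together with \cref{conj-into-ab} for (1);
\item conjugation by $t$ for (2);
\item the condition $XaX^{-1}\in\langle a\rangle$ for (3);
\item the product $C_{G_p}(a)\langle b\rangle$ for (4);
\item conjugating $\langle a\rangle$ by $\Mon\langle b\rangle$ for (6);
\item a disjunction for (7).
\end{itemize}
In (1) no index shift is needed. Since $C_{G_p}(b)=\langle a_2^{-i}a_1a_2^{i}\rangle$ and $C_{G_p}(a_0)\subseteq\langle a_0,a_1\rangle$, \cref{conj-into-ab} applies exactly as stated.

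Item (5) is where you genuinely differ, and your route is the better one.

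The paper restricts to odd $i$, so that $i\neq 0$ when it invokes \cref{BS-cents}(3). It extracts the $b$-part of $Z=(ab^i)^j$ via $Z=UV$ with $U\in C(a)$ and $V\in C(b)$. It then tries to remove the parity restriction with $b\bar Y^2=Y$ and $b\bar V^2=V$. That last step does not give the claimed set. With $Y=b^{2\bar i+1}$ and $V=b^{(2\bar i+1)j}$, the condition on $\bar V=b^m$ is $2m+1=(2\bar i+1)j$, i.e.\ $(2\bar i+1)\mid(2m+1)$, not $\bar i\mid m$. For example, when $\bar i=0$ every $m$ is allowed. Also, the displayed tuple $(b^{\bar i},b^{\bar i j},b^{2\bar i+1},b^{(2\bar i+1)j})$ satisfies $b\bar V^2=V$ only when $j=1$.

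You avoid the parity detour entirely. You impose the centraliser condition on an arbitrary $X=b^i$ and read off the $b$-exponent through $ZY^{-1}\in C_{G_p}(a)=\ker(\langle a,b\rangle\to\Z)$. This is the same extraction as the paper's $Z=UV$.

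The one thing you should make explicit is what your main system does when $X=1$. A positive existential sentence cannot impose $i\neq 0$, and adding a disjunct cannot remove spurious solutions of the main system. The check is easy: for $X=1$ the condition becomes $Z\in C_B(a)$, so $Y\in C_{G_p}(a)\cap\langle b\rangle=\{1\}$. Hence your main system alone already defines $\{(b^i,b^{ij})\}$, which is even equationally definable. Your extra disjunct $(X=1)\wedge(Y=1)$ is harmless but redundant.
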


\begin{proof}
 	\eqref{item:b-defable}: We first apply \cref{cents} to show that
	\[
			C_{G_p}(b) = C_{G_p}(tat^{-1}) = t C_{G_p}(a) t^{-1}
			= t \langle b^{-i} a b^i \; (i \in \Z_{\geq 0}) \rangle t^{-1}
					= \langle  (tbt^{-1})^{-i} b (tbt^{-1})^i \; (i \in \Z_{\geq 0}) \rangle.
	\]
	We then consider the system \((Xb = bX) \wedge (XaX^{-1} a = a XaX^{-1})\).
	The first equation insists that a solution \(x\) to \(X\) lies in \(\langle
	(tbt^{-1})^{-i} b (tbt^{-1})^i \; (i \in \Z)\rangle\), and the second
	equation insists (again, using \cref{cents}) that \(xax^{-1} \in C_{G_p}(a)
	=  \langle b^{-i} a b^i \; (i \in \Z_{\geq 0}) \rangle\). Using the
	description of \(G_p\) as \(H_p \rtimes_\varphi \Z\) from
	\cref{inf-pres}
	along with
	\cref{conj-into-ab}, the only elements of \(\langle (tbt^{-1})^{-i} b (tbt^{-1})^i \;
	(i \in \Z)\rangle\) that conjugate \(a\) into \(\langle a, b \rangle\) are
	those in \(\langle b \rangle\). Thus every solution to this system of
	equations lies in \(\langle b \rangle\).  Since conjugates of \(a\) by
	powers of \(b\) commute with \(a\), it follows that \(\langle b \rangle\)
	is precisely the set of solutions.

	\eqref{item:a-defable}: Since \(\langle b \rangle\) is equationally
	definable by \eqref{item:b-defable}, and \(t^{-1} b^i t = a^i\) for all \(i
	\in \Z\), it follows that \(\langle a \rangle\) is the set of solutions to
	\(Y\) in the system \((X \in \langle b \rangle) \wedge (t^{-1} X t = Y)\).

	\eqref{item:mon-b-defable}: Note that from \eqref{item:a-defable} and
	\eqref{item:b-defable}, \(\langle a \rangle\) and \(\langle b \rangle\) are
	equationally definable in \(G_p\). Then consider the system \((X \in
	\langle b \rangle) \wedge (Y \in \langle a \rangle) \wedge (X aX^{-1} =
	Y)\). Noting that \(\langle a, b \rangle\) is a copy of \(\BS(1, p)\), we
	can use the fact that \(b^i a b^{-i} \in \langle a \rangle\) if and only if
	\(i \geq 0\), and thus it follows that the set of solutions to \(X\) in
	this system is \(\Mon\langle b \rangle\).

	\eqref{item:ab-defable}: Using \cref{cents} and \eqref{item:b-defable}, 
	we have that \(C_{G_p}(a) = \langle b^{-i} a b^i \; (i \in \Z_{\geq 0}) \rangle\)
	and \(\langle b \rangle\) are both equationally definable in \(G_p\).
	Recall that \(\langle a, b \rangle\) is a solvable Baumslag-Solitar group,
	and is thus the semidirect product of these subgroups. In particular,
	\(\langle a, b \rangle = \langle b^{-i} a b^i \; (i \in \Z_{\geq 0}) \rangle
	\langle b \rangle\). As a product of equationally definable sets,
	\(\langle a, b \rangle\) is thus equationally definable.

	\eqref{item:b-div-defable}: Using \eqref{item:ab-defable}, the solvable
	Baumslag-Solitar subgroup \(B = \langle a, b \rangle\) is equationally
	definable. It thus suffices to show that \(\{(b^i, b^{ij}) \mid i, j \in
	\Z\}\) is equationally definable in \(B\). We have from \cref{BS-cents}
	\eqref{item:BS-cent-b} that \(C_B(b) = \langle b \rangle\). Thus the
	set of solutions to the system of equations in \(B\): \((Xb = bX) \wedge (Y = bX^2)\) is
	\(\{(b^i, b^{2i + 1}) \mid i \in \Z\}\). It follows that
	\(\{b^i \mid i \in 2 \Z + 1\}\) is equationally definable in \(B\). From
	\cref{BS-cents}\eqref{item:BS-cent-abi}, we have that \(C_B(ab^i) = \langle ab^i
	\rangle\) for all \(i \in 2 \Z + 1\). Thus the set of solutions to
	\((Y \in \{b^i \mid i \in 2\Z + 1\}) \wedge (Z (aY) = (aY)Z)\) is
	\[
		\{(b^i, (ab^i)^j) \mid i, j \in \Z, i \text{ odd}\}.
	\]
	We have that for all \(i, j \in \Z\), \((ab^i)^j = a^{1 + p^i + \cdots +
	p^{(j - 1)i}} b^{ij}\). Thus if we add the equations \(Ua = aU\), \(Vb =
	bV\) and \(UV = Z\), (using the fact that \(B\) is a semidirect product of
	\(C_B(a)\) with \(\langle b \rangle\) from
	\cref{BS-cents}\eqref{item:BS-cent-a}), we have that the set of solutions
	to \((Y, Z, U, V)\) in this system will be
	\[
			\{(b^i, a^{1 + p^i + \cdots + p^{(j - 1)i}} b^{ij},  a^{1 + p^i +
			\cdots + p^{(j - 1)i}}, b^{ij}) \mid i, j \in \Z, i \text{ odd}\}.
	\]
	In particular, the set \(\{(b^i, b^{ij}) \mid i, j \in \Z, i \text{ odd}\}\)
	is equationally definable in \(B\). It remains to `remove' the oddness
	of \(i\). To do so, we take \((Y, V)\) from the previous system, and add
	the equations
	\[
		(\bar{Y} b = b \bar{Y}) \wedge
		(\bar{V} b = b \bar{V}) \wedge
		(b \bar{Y}^2 = Y) \wedge
		(b \bar{V}^2 = V).
	\]
	The first two equations insist all solutions to \(\bar{Y}\) and \(\bar{V}\)
	will lie in \(\langle b \rangle\) and the latter two insist that the exponents of
	the solutions to \((Y, V)\) can be obtained from the exponents of the solutions
	to \((\bar{Y}, \bar{V})\) by doubling and then adding \(1\). In other words,
	the set of solutions to \((\bar{Y}, \bar{V}, Y, V)\) is
	\[
			\{(b^{\bar{i}}, b^{\bar{i} j}, b^{2 \bar{i} + 1},
			b^{(2 \bar i + 1)j}) \mid \bar{i}, j \in \Z\}.
	\]
	We have thus shown the desired set is equationally definable in \(B\), and thus it
	is also equationally definable in \(G_p\), since \(B\) is equationally definable
	in \(G_p\).

	\eqref{item:b-divp-pre-defable}: First note that \(\langle a \rangle\) and
	\(\Mon\langle b \rangle\) are equationally definable by \eqref{item:a-defable}
	and \eqref{item:mon-b-defable}. Then consider the system \((X \in \langle a
	\rangle) \wedge (Y \in \Mon\langle b \rangle) \wedge (Z = YXY^{-1}) \). Since
	\(b^r a^i b^{-r} = a^{ip^r}\) for all \(i \in \Z\) and \(r \in \Z_{\geq 0}\) 
	the set of solutions to \((X, Y, Z)\) in this system is
	\[
			\{(a^i, b^r, a^{ip^r}) \mid i \in \Z, r \in \Z_{\geq 0}\}.
	\]
	We have thus shown that \(\{(a^i, a^{ip^r}) \mid i \in \Z, r \in \Z_{\geq 0}\}\)
	is equationally definable in \(G_p\). It thus follows that if
	\((X, Y)\) are variables taking solutions in this set, and we add new
	equations \((X' = tXt^{-1}) \wedge (Y' = t Y t^{-1})\), then the
	solutions to \((X', Y')\) will be the desired set.

	\eqref{item:b-divp-defable}: Note that the system
	\[
			(X, Y) \in \{(b^i, b^{i p^r}) \mid i \in \Z, r \in \Z_{\geq 0}\}) \wedge
			(Z = Y^{-1})
	\]
	has a solutions set \(\{(b^i, b^{i p^r}, b^{-ip^r}) \mid i \in \Z, r \in \Z_{\geq 0}\}\),
	and so \(\{(b^i, b^{-ip^r}) \mid i \in \Z, r \in \Z_{\geq 0}\}\) is equationally
	definable in \(G_p\). It thus follows that the desired set is definable in the
	positive existential theory of \(G_p\), as a union of two equationally definable
	sets in \(G_p\).
\end{proof}

We have now completed the majority of the proof of \cref{main-thm}. It simply
remains to combine some of the results from \cref{defable} to encode
the Diophantine problem in the signature \((\Z, +, |, |^p)\).

\begin{proof}[Proof of \cref{main-thm}]
	As discussed earlier, it suffices to show that the positive existential
	theory in \(G_p\) is undecidable. To do so, we encode \(\Z\) with the
	operations \(+\), \(|\) and \(|^p\) into the positive existential theory of
	\(G\). As done earlier, if we use \(b = tat^{-1}\), the we can apply
	\cref{defable}\eqref{item:b-defable} to show the infinite cyclic subgroup
	\(\langle b \rangle\) is definable in the positive existential theory of
	\(G\). We can thus encode any positive existential existential sentence in \((\Z, +)\) into the positive existential theory of \(G_p\).
	Adding the divisibility constraints \(|\) and \(|^p\) can then be done by
	\cref{defable}\eqref{item:b-div-defable} and \eqref{item:b-divp-defable}.
	Since the positive existential theory in \((\Z, +)\) with the divisibility
	constraints of the form \(|\) and \(|^p\) is undecidable
	\cite[Corollary]{Denef}, the result follows.
\end{proof}

In order to show \cref{main-cor}, we now show that the Diophantine problem
in a group \(G\) reduces to the single equation problem in \(G \ast \Z\). The
idea of reducing systems of equations to single equations has been done before,
such as in \cite{Bogopolski22}, but some of these methods are not computable.

\begin{lem}
	\label{SEP-DP}
	Let \(G\) be a finitely generated group. If \(G \ast \Z\) has a decidable
	single equation problem, then \(G\) has a decidable Diophantine problem.
\end{lem}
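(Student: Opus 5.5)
The plan is to give a computable map sending a finite system \(\mathcal E = (w_1 = 1) \wedge \cdots \wedge (w_n = 1)\) in \(G\) to a single equation \(W = 1\) in \(G \ast \Z\), where \(\Z = \langle c \rangle\), such that \(W=1\) is solvable if and only if \(\mathcal E\) is. A decision procedure for single equations in \(G \ast \Z\) then decides the Diophantine problem in \(G\). The basic tool is the retraction \(\pi \colon G \ast \Z \to G\) that fixes \(G\) and sends \(c \mapsto 1\). Since each \(w_i\) has constants only from \(G\), we have \(\pi(w_i(x_1, \ldots, x_k)) = w_i(\pi(x_1), \ldots, \pi(x_k))\). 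So if a solution \((x_1, \ldots, x_k)\) of \(W=1\) in \(G \ast \Z\) forces every \(w_i(x_1,\ldots,x_k)\) into \(\ker \pi\), then \((\pi(x_1), \ldots, \pi(x_k))\) solves \(\mathcal E\) in \(G\). The variables may take values outside \(G\), but \(\pi\) brings them back, so only this weaker constraint is needed.

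The first step is the observation that \([u, c] = 1\) in \(G \ast \Z\) forces \(u \in \langle c \rangle \subseteq \ker\pi \cdot \langle c\rangle\), with \(\pi(u) = 1\). This holds because in a free product the centraliser of \(c\), an element of a free factor that is not conjugate into the other factor, is \(C_{\Z}(c) = \langle c \rangle\). This is the standard centraliser theorem for free products, e.g.\ \cite[Theorem 4.5]{MagnusKarrassSolitar}. Hence each single equation \([w_i, c] = 1\) encodes \(w_i = 1\) in the above sense. Conversely, if \((g_1, \ldots, g_k) \in G^k\) solves \(\mathcal E\), then every \(w_i\) evaluates to \(1\), and so every commutator expression built from the \(w_i\) is trivial. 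Thus the forward direction is automatic for any \(W\) lying in the normal closure of the \(w_i\).

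The second step, which I expect to be the main obstacle, is to combine the \(n\) constraints into one equation. I would take
\[
W = \prod_{i=1}^n c^{i}\,[w_i, c]\,c^{-i},
\]
or, if cancellation between factors causes trouble, a variant with more widely spaced conjugators such as \(c^{3^i}\), or with new separating constants. I would then argue with the reduced-form (normal form) theorem for \(G \ast \Z\): if some \(u_i = w_i(x)\) is not in \(\langle c\rangle\), then \([u_i,c]\) is a nontrivial element. The claim to prove is that the syllables coming from different blocks cannot cancel completely, because of the distinct \(c\)-exponents separating them, so \(W \neq 1\). The delicate part is the case where the \(u_i\) themselves involve powers of \(c\), where cyclic cancellation in \(u_i c u_i^{-1} c^{-1}\) could in principle interact with the neighbouring conjugators. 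Here I would first cyclically reduce: write \(u_i = c^{\alpha} v c^{\beta}\) with \(v\) beginning and ending in \(G\)-syllables, or \(v=1\). Then the reduced form of \([u_i,c]\) can be computed explicitly as \(c^{\alpha} v\, c\, v^{-1} c^{-\alpha-1}\) up to the edge exponents. From this one checks that after conjugation by \(c^{i}\) the only possible cancellation is among pure \(c\)-powers at the junctions, so a \(G\)-syllable of \(v\) survives in the reduced form of \(W\).

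Finally, I would note that the construction \(\mathcal E \mapsto W\) is clearly computable. Combining the two directions then shows that \(\mathcal E\) has a solution in \(G\) if and only if \(W = 1\) has a solution in \(G \ast \Z\), which gives \cref{SEP-DP}.
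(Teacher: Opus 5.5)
Your reduction map is fine (in fact $W$ works), but the argument you give for the step you flag as the main obstacle rests on a false claim. The conjugators $c^i$ give no separation, because $c^i[u_i,c]c^{-i}=[c^iu_i,c]$ and the variables can absorb any power of $c$; the same holds for $c^{3^i}$. In your own notation, with $u_i=c^{\alpha_i}v_ic^{\beta_i}$ the $i$-th block is $c^{i+\alpha_i}v_i\,c\,v_i^{-1}c^{-i-\alpha_i-1}$. So the junction of blocks $i$ and $i+1$ reads $v_i^{-1}c^{\alpha_{i+1}-\alpha_i}v_{i+1}$, and whenever $\alpha_{i+1}=\alpha_i$ the $c$-power disappears and $G$-syllables of neighbouring blocks meet and can cancel. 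Already $u_1=u_2=g\in G\setminus\{1\}$ gives $c[g,c]c^{-1}\cdot c^2[g,c]c^{-2}=c\,g\,c^2g^{-1}c^{-3}$, where $g^{-1}g$ has cancelled. So the claim that ``the only possible cancellation is among pure $c$-powers at the junctions'' fails. With $n$ blocks you would have to control cascading cancellation, and your sketch gives no mechanism for that. You are also aiming at more than you need: as you observed at the start, it suffices to show $\pi(u_i)=1$ for all $i$, not $u_i\in\langle c\rangle$.

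That weaker conclusion is detected by a metabelian quotient rather than by normal forms. Let $\rho\colon G\ast\langle c\rangle\to\Z\wr G=(\bigoplus_{g\in G}\Z\delta_g)\rtimes G$ be the identity on $G$ and send $c\mapsto\delta_1$. Equivalently, use that $\ker\pi$ is free on $\{gcg^{-1}\}_{g\in G}$ and abelianise. Then $\rho([u,c])=\delta_{\pi(u)}-\delta_1$, and conjugation by powers of $c$ acts trivially on the base. Hence $\rho(W)=\sum_i(\delta_{\pi(u_i)}-\delta_1)$, which is zero only if every $\pi(u_i)=1$, so $\pi(x)$ solves $\mathcal E$. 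With this argument even $W=\prod_i[w_i,c]$ works.

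This is a genuinely different route from the paper's. The paper uses the elements $\mathcal W_i(u)=\#^{i+1}u\#^{-i}u^{-1}$ and shows, via the centraliser theorem for free products, that each generates its own centraliser. It then shows that $\mathcal W_i(u)$ and $\mathcal W_j(v)$ ($i\neq j$) commute only if $u,v\in\langle\#\rangle$, and combines the equations by nested commutators. The repaired version of your product construction is shorter and needs no centraliser or normal-form analysis.
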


\begin{proof}
	Let \(\#\) be the generator for the copy of \(\Z\) in \(H = G \ast \Z\).
	For each \(u \in H\) and \(i \in \Z\), define \(\mathcal{W}_i(u) = \#^{i +
	1} u \#^{-i} u^{-1}\).  We first claim that for all \(u \in H\) that
	\(\mathcal W_i(u)\) is not conjugate to any element in \(G\). To see this,
	we can project onto \(\langle \# \rangle\), and note that \(\#^{i + 1} u
	\#^{-i} u^{-1}\) will be mapped to \(\# \neq 1\), and so \(\mathcal
	W_i(u)\) is not conjugate to any element in \(G\). We can thus apply for
	example \cite[Theorem 2.1]{BarkThesis} to conclude that \(C_H(\mathcal
	W_i(u))\) is cyclic. Since \(\mathcal W_i(u)\) projects onto \(\#\),
	which is not a proper power in \(\langle \# \rangle\), \(\mathcal W_i(u)\)
	cannot be a proper power, and so \(C_H(\mathcal W_i(u)) = \langle
	\mathcal W_i(u) \rangle\).

	We next show that for \(i \neq j\) and \(u, v \in H\) that \(\mathcal
	W_i(u)\) and \(\mathcal W_j(v)\) commute if and only if \(u, v \in \langle
	\# \rangle\).  The backward direction is immediate, so suppose that
	\(\mathcal W_i(u) \) and \(\mathcal W_j(v)\) commute. As we calculated
	above, these elements both generate their own centralisers, and so either
	\(\mathcal W_i(u) = \mathcal W_j(v)\) or \(\mathcal W_i(u) = \mathcal
	W_j(v)^{-1}\). Projecting to \(\langle \# \rangle\) rules out the latter,
	and so \(\mathcal W_i(u) = \mathcal W_j(v)\); that is, \(\#^{i + 1} u
	\#^{-i} u^{-1} = \#^{j + 1} v \#^j v^{-1}\). Then \([\#^i, u] = [\#^j,
	v]\). 

	Write \(u = u_1 \#^{i_1} \cdots u_m \#^{i_m}\) and \(v = v_1 \#^{j_1}
	\cdots v_n \#^{j_n}\), where \(u_1, v_1 \in G, u_2, v_2, \ldots, u_m,
	v_n \in G \setminus \{1\}\), \(i_1, j_1, \ldots, i_{m - 1}, j_{n - 1} \in
	\Z \setminus \{0\}\) and \(i_m, j_n \in \Z\); that is, in the standard
	normal form for free products. Then
	\begin{align*}
		[\#^i, u] & = \#^i u_1 \#^{i_1} \cdots u_m \#^{-i} u_m^{-1}
		\cdots \#^{-i_1} u_1^{-1} \\
		[\#^j, v] & = \#^j v_1 \#^{j_1} \cdots v_n \#^{-j} v_n^{-1}
		\cdots \#^{-j_1} v_1^{-1}. 
	\end{align*}
	These words may not quite be in the normal form from \cref{L_2-norm-form}, since \(u_1\) and
	\(v_1\) can be \(1\). But that can only affect the beginning of the normal
	form words, so we start by comparing the ends. Suppose for a contradiction
	that \(n > 1\). Since these elements
	coincide, we must have that
	\(u_1 = v_1\). Thus these normal form words must have the same alternating
	length; that is, \(m = n\). By comparing these words, we
	obtain \(i = j\), a contradiction. Thus \(n = 1\), and so comparing again
	gives \(m = 1\). We thus have that \(u = u_1 \#^{i_1}\) and
	\(v = v_1 \#^{j_1}\). Then \([\#^i, u] = \#^i u_1 \#^{-i} u_1^{-1}\)
	and \([\#^j, v] = \#^j v_1 \#^{-j} v_1^{-1}\). Again, comparing the
	ends of these words gives \(u_1 = v_1\). It thus follows that
	\(\#^i u_1 \#^{-i} u_1^{-1} = \#^j u_1 \#^{-j} u_1^{-1}\), and so
	\(\#^{i - j} u_1 \#^{-(i - j)} = u_1\); that is, \(u_1 \in
	C_H(\#^{i - j})\). Since \(i - j \neq 0\), we have that
	\(C_H(\#^{i -j}) = C_{\langle \# \rangle}(\#^{i - j}) = \langle \# \rangle\),
	and so \(u_1 \in \langle \# \rangle\). But \(u_1 \in G\), and so
	\(u_1 = v_1 = 1\). We can thus conclude that \(u_1, v_1 \in \langle \#
	\rangle\).

	We have now shown that for all \(i \neq j\) and all \(u, v \in H\) that
	\([\mathcal W_i(u), \mathcal W_j(v)] = 1\) if and only if \(u, v \in
	\langle \# \rangle\). It follows that if \(w_1, \ldots, w_n \in H\), then
	\[
		[\mathcal W_1(w_1), 
		\mathcal W_2([\mathcal W_3(w_2), 
		[\mathcal W_4([\mathcal W_5(w_3)], \cdots )])] \cdots )] = 1
	\]
	if and only if \(w_1, \ldots, w_n \in \langle \# \rangle\). Call
	the left hand side of the above equation \(\mathcal W(w_1, \ldots, w_n)\).
	We generalise the notation \(\mathcal W_i(u)\) and \(\mathcal W(w_1, \ldots, w_n)\)
	to the case where \(u, w_1 \ldots, w_n \in H \ast F(\mathcal X)\) for some finite
	set \(X\).

	So now suppose \(\mathcal E = (w_1 = 1) \wedge \cdots \wedge (w_n = 1)\) is
	a system of equations in \(G\) with variables \(\mathcal X\). We claim that
	\(\mathcal E\) admits a solution in \(G\) if and only if \(\mathcal W(w_1,
	\ldots, w_n) = 1\) admits a solution in \(H\). If \(\mathcal E\) admits a
	solution \(\phi \colon G \ast F(\mathcal X) \to G\), then \(\phi\) will
	extend trivially to a solution to \(\mathcal W(w_1, \ldots, w_n)\). Now
	suppose \(W(w_1, \ldots, w_n)\) admits a solution \(\psi \colon H \ast
	F(\mathcal X) \to H\). The properties of \(\mathcal W\) tells us that this
	implies that \(\psi(w_1), \ldots, \psi(w_n) \in \langle \# \rangle\). Let
	\(r \colon H \to G\) be the retraction defined by \(r(\#) = 1\) and
	\(r(g) = g\) for all \(g \in G\). Then \(r \circ \psi(w_i) = 1\) for
	all \(i \in \{1, \ldots, n\}\), and so \(\psi\) is a solution to
	\(\mathcal E\). We have thus shown that \(\mathcal E\) admits a solution
	in \(G\) if and only if \(\mathcal W(w_1, \ldots, w_n) = 1\) admits a solution
	in \(H\).

	Noting that \(\mathcal W(w_1, \ldots, w_n)\) can be effectively computed
	from \(\mathcal E\), we now have that for every system of equations
	\(\mathcal E\) in \(G\) there is a computable single equation in \(H\) that
	admits a solution in \(H\) if and only if \(\mathcal E\) does in \(G\). We
	can thus use the single equation problem in \(H\) to decide the Diophantine
	problem in \(G\), as required.
\end{proof}

\section{Further Questions}
Not every one-relator group is a one-relator monoid. In fact, it was proved by 
Perrin and Schupp \cite{Perrin1984} that a one-relator group is positive if and only if it is a one-relation monoid. By a positive one-relator group we mean one which admits a one-relator group presentation where no inverse letter appears in the defining relator word.   
Baumslag \cite{baumslag1971positive}
proved that all positive one-relator groups are residually solvable. 
In that paper Baumslag points out that the Baumslag-Gersten group $G_2$ is not residually solvable. Therefore it is not a  
positive one-relator group and thus $G_2$ does not admit a one-relator monoid presentation.  
So the examples in this paper are not a source of examples of one-relator monoids with undecidable Diophantine problem, which leads us to ask: 

\begin{qu}
	Is the Diophantine problem decidable in one-relator monoids?
\end{qu}

The Baumslag-Gersten group is a famously badly behaved example of a one-relator
group. Perhaps if one restricts to a class of one-relator groups that excludes
the Baumslag-Gersten group due to one of its strange properties, one would
obtain a class of groups with decidable Diophantine problems. We consider,
for example, residual finiteness.

\begin{qu}
	Is the Diophantine problem decidable in residually finite one-relator
	groups?
\end{qu}

Since the conjugacy problem is an instance of the Diophantine problem with a single variable, this leads naturally to the following question. 

\begin{qu}
	It is decidable if a single equation with one variable in
	a given one-relator group is satisfiable? 
\end{qu}

\section*{Acknowledgements}
The authors would like to thank Corentin Bodart for pointing out \cite{BS-fixed}, thus
making the proof of \cref{BS-cents} substantially easier.

\bibliographystyle{custom} 
\bibliography{references}

\end{document}